\newtheorem{theorem}{Theorem}[subsection]
\newtheorem*{thma}{Remark}
\newtheorem{lemma}[theorem]{Lemma}
\newtheorem{definition}[theorem]{Definition}
\newtheorem{example}[theorem]{Example}
\newtheorem{remark}[theorem]{Remark}
\numberwithin{equation}{section}
\title{Crystal rules for $(\ell,0)$-JM partitions}
\author{Chris Berg\\
\small Fields Institute, Toronto, ON, Canada\\
\small \texttt{cberg@fields.utoronto.edu}\\
}
\begin{document}


\newlength\cellsize \setlength\cellsize{18\unitlength}
\savebox2{%
\begin{picture}(18,18)
\put(0,0){\line(1,0){18}}
\put(0,0){\line(0,1){18}}
\put(18,0){\line(0,1){18}}
\put(0,18){\line(1,0){18}}
\end{picture}}
\newcommand\cellify[1]{\def\thearg{#1}\def\nothing{}%
\ifx\thearg\nothing
\vrule width0pt height\cellsize depth0pt\else
\hbox to 0pt{\usebox2\hss}\fi%
\vbox to 18\unitlength{
\vss
\hbox to 18\unitlength{\hss$#1$\hss}
\vss}}
\newcommand\tableau[1]{\vtop{\let\\=\cr
\setlength\baselineskip{-16000pt}
\setlength\lineskiplimit{16000pt}
\setlength\lineskip{0pt}
\halign{&\cellify{##}\cr#1\crcr}}}
\savebox3{%
\begin{picture}(15,15)
\put(0,0){\line(1,0){15}}
\put(0,0){\line(0,1){15}}
\put(15,0){\line(0,1){15}}
\put(0,15){\line(1,0){15}}
\end{picture}}
\newcommand\expath[1]{%
\hbox to 0pt{\usebox3\hss}%
\vbox to 15\unitlength{
\vss
\hbox to 15\unitlength{\hss$#1$\hss}
\vss}}


\maketitle

\begin{abstract}

Vazirani and the author \cite{BV} gave a new interpretation of what we called $\ell$-partitions,
 also known as $(\ell,0)$-Carter partitions. The primary interpretation of such a partition
 $\lambda$ is that it corresponds to a Specht module $S^{\lambda}$ which remains irreducible
 over the finite Hecke algebra $H_n(q)$ when $q$ is specialized to a primitive $\ell^{th}$ root
 of unity. To accomplish this we relied heavily on the description of such a partition in terms of 
its hook lengths, a condition provided by James and Mathas. In this paper, I 
use a new description of the crystal $reg_\ell$ which helps extend previous results 
to all $(\ell,0)$-JM partitions (similar to $(\ell,0)$-Carter partitions, but
 not necessarily $\ell$-regular), by using an analogous condition for hook lengths which was proven by 
work of Lyle and Fayers.
\end{abstract}

\section{Introduction}

The main goal of this paper is to generalize results of \cite{BV} to a larger class of partitions. 
One model of the crystal $B(\Lambda_0)$ of $\widehat{\mathfrak{sl}_{\ell}}$, referred to here as $reg_\ell$, has as nodes 
$\ell$-regular partitions. In \cite{BV} we proved results about where on the crystal $reg_\ell$
a so-called $\ell$-partition could occur. $\ell$-partitions are the $\ell$-regular partitions for
which the Specht modules $S^{\lambda}$ are irreducible for the Hecke algebra $H_n(q)$ when $q$ is
specialized to a primitive $\ell^{th}$ root of unity. An $\ell$-regular partition $\lambda$ 
indexes a simple module $D^{\lambda}$ for $H_n(q)$ when $q$ is 
a primitive $\ell^{th}$ root of unity. We noticed that within the crystal  $reg_\ell$ that another type of partitions, which we call weak $\ell$-partitions, 
satisfied rules similar to the rules given in \cite{BV} for $\ell$-partitions. In order to prove this, 
we built an isomorphic version of the crystal $reg_\ell$, which we denote $ladd_\ell$. The description of
$ladd_\ell$, with the isomorphism to $reg_\ell$, can be found in \cite{B}.

\subsection{Summary of results from this paper}
In Section \ref{removing} we give a new way of characterizing $(\ell,0)$-JM partitions by their removable 
$\ell$-rim hooks. In Section \ref{construct_JM} we give a different characterization of $(\ell,0)$-JM partitions.
 Section \ref{extend} extends our crystal theorems from 
\cite{BV} to the crystal $ladd_\ell$. Section 
\ref{gen_crystal} transfers the crystal theorems on $ladd_\ell$ to theorems on $reg_\ell$ via the isomorphism
described in \cite{B}.

\subsection{Background and Previous Results}
Let $\lambda$ be a partition of $n$ (written $\lambda \vdash n$) and $\ell \geq 3$ be an integer. We will use the convention $(x,y)$ to denote the box which sits in the $x^{\textrm{th}}$ row and the $y^{\textrm{th}}$ column of the Young diagram of $\lambda$. We denote the transpose of $\lambda$ by $\lambda'$. Sometimes the shorthand $(a^k)$ will be used to represent the rectangular partition which has $k$-parts, all of size $a$. $\mathcal{P}$ will denote the set of all partitions. 
An \textbf{$\ell$-regular partition} is one in which no part occurs $\ell$ or more times. The \textbf{length of a partition} $\lambda$ will be the number of nonzero parts of $\lambda$ and will be denoted $len(\lambda)$. If $(x,y)$ is a box in the Young diagram of $\lambda$, the \textbf{residue} of $(x,y)$ is $y-x\mod \ell$.

The \textbf{hook length} of the $(a,c)$ box of $\lambda$ is defined to be the number of boxes to the right of  or below the box $(a,c)$, including the box $(a,c)$ itself. It will be denoted \textbf{$h_{(a,c)}^{\lambda}$}.

An \textbf{$\ell$-rim hook} in $\lambda$ is a connected set of $\ell$ boxes in the Young diagram of $\lambda$, containing no $2 \times 2$ square, such that when it is removed from $\lambda$, the remaining diagram is the Young diagram of some other partition.

Any partition which has no $\ell$-rim hooks is called an \textbf{$\ell$-core}. 
Equivalently, $\lambda$ is an $\ell$-core if for every box $(i,j) \in \lambda$, $\ell \nmid h_{(i,j)}^\lambda$. Any partition $\lambda$ has an 
\textbf{$\ell$-core}, which is obtained by removing $\ell$-rim hooks  from the outer edge while at 
each step the removal of a hook is still a (non-skew) partition. The core is uniquely determined from 
the partition, independently of choice of  successively removing rim hooks. See \cite{JK} for more details.

$\ell$-rim hooks which are horizontal (whose boxes are contained in one row of a partition) will be called \textbf{horizontal $\ell$-rim hooks}. $\ell$-rim hooks which are not will be called \textbf{non-horizontal $\ell$-rim hooks}.
An $\ell$-rim hook contained entirely in a single column of the Young diagram of a partition will be called a \textbf{vertical $\ell$-rim hook}. $\ell$-rim hooks not contained in a single column will be called \textbf{non-vertical $\ell$-rim hooks}.
Two connected sets of boxes will be called \textbf{adjacent} if there exist boxes in each which share an edge.

\begin{example} Let $\lambda = (3,2,1)$ and let $\ell=3$. Then the boxes $(1,2), (1,3)$ and $(2,2)$ comprise a (non-vertical, non-horizontal) 3-rim hook. After removal of this $3$-rim hook, the remaining partition is $(1,1,1)$, which is a vertical 3-rim hook. Hence the 3-core of $\lambda$ is the empty partition. These two 3-rim hooks are adjacent.

\begin{center}
$\tableau{ \mbox{} &\mbox{}&\mbox{}\\
\mbox{}&\mbox{}\\
\mbox{}}$
\end{center}
\end{example}

\begin{example} Let $\lambda = (4,1,1,1)$ and $\ell=3$. Then $\lambda$ has two $3$-rim hooks (one horizontal and one vertical). They are not adjacent.

\begin{center}
$\tableau{\mbox{}&\mbox{}&\mbox{}&\mbox{}\\
\mbox{}\\
\mbox{}\\
\mbox{}}$
\end{center}
\end{example}

\begin{definition}\label{lpartition}
An \textbf{$\ell$-partition} is an $\ell$-regular partition containing no removable non-horizontal $\ell$-rim hooks, such that after removing any number of horizontal $\ell$-rim hooks, the remaining diagram still has no removable non-horizontal $\ell$-rim hooks.
\end{definition}

We will study combinatorics related to the finite Hecke algebra $H_n(q)$. For a definition of this algebra, see for instance \cite{BV}.
In this paper we will always assume that $q \in \mathbb{F}$ is a primitive $\ell^{th}$ root of unity in a field $\mathbb{F}$ of characteristic zero.

Similar to the symmetric group, a construction of the Specht module $S^{\lambda} = S^{\lambda}[q]$ exists for $H_n(q)$ (see \cite{DJ}).
Let $\ell$ be an integer greater than $1$. Let 

\[m_\ell(k) = \left\{ 	\begin{array}{ll}
			  1 &  \textrm{ $\ell \mid k$}\\
			  0 &  \textrm{ $\ell \nmid k$}. 
			\end{array} \right.\]
It is known that over the finite Hecke algebra $H_n(q)$, when $q$ is a primitive $\ell^{th}$ root of unity, the Specht module $S^{\lambda}$ for an $\ell$-regular partition $\lambda$  is irreducible  if and only if 
\[\begin{array}{lccrr} (\star) &  m_\ell(h_{(a,c)}^\lambda) = m_\ell(h_{(b,c)}^\lambda) & \textrm{for all pairs $(a,c)$, $(b,c) \in \lambda$} \end{array}\]
(see \cite{JM}).   
In \cite{BV}, we proved the following.
\begin{theorem}\label{main_theorem_l_partitions}
A partition is an $\ell$-partition if and only if it is $\ell$-regular and satisfies ($\star$).
\end{theorem}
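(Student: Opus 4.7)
The plan is to prove both implications of the biconditional. For the direction $(\star) \Rightarrow \ell$-partition, I would first show that $\lambda$ has no removable non-horizontal $\ell$-rim hook. Such a rim hook corresponds to a box $(a,c) \in \lambda$ with $h_{(a,c)}^{\lambda} = \ell$ and $(a+1,c) \in \lambda$; a direct hook-length calculation gives $h_{(a+1,c)}^{\lambda} = \ell - 1 - (\lambda_a - \lambda_{a+1})$, which lies in $[1,\ell-1]$, so $\ell \nmid h_{(a+1,c)}^{\lambda}$ while $\ell \mid h_{(a,c)}^{\lambda}$, violating $(\star)$ in column $c$.

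Next I would extend this to every horizontal descendant $\mu$ of $\lambda$. A hypothetical removable non-horizontal $\ell$-rim hook in $\mu$ sits at some $(a',c')$ with $h_{(a',c')}^{\mu} = \ell$ and $(a'+1,c') \in \mu$; tracing back to $\lambda$, the hook length $h_{(a',c')}^{\lambda}$ equals $\ell$ plus the cumulative shift from the removed horizontal rim hooks. A case analysis on this shift (for a single removal the shift is $0$, $1$, or $\ell$, corresponding to the unchanged, $-1$-shift, or $-\ell$-shift region) shows that each possibility either produces a direct violation of $(\star)$ in column $c'$ (the $\ell$ and $2\ell$ cases, the latter via row $a+1$ whose hook length in $\lambda$ lies in $[1,\ell-1]$) or, in the delicate $\ell+1$ case, forces column $c'-1$ to share the same row set $\{1,\dots,a\}$ as column $c'$ and, by comparing $(\star)$ across the two columns together with the $\ell$-regularity constraint on part multiplicities, yields a modular obstruction with no feasible $\lambda_{a'+1}$. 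Induction on the number of rim hooks removed then shows that $\lambda$ is an $\ell$-partition.

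For the converse, $\ell$-partition $\Rightarrow (\star)$, I would argue by contrapositive. Assume $(\star)$ fails; pick a column $c$ with a jump in $m_\ell$ between adjacent rows $a, a+1$ and WLOG take $h_{(a,c)}^{\lambda} = k\ell$ and $\ell \nmid h_{(a+1,c)}^{\lambda}$. If $k=1$, the rim hook at $(a,c)$ spans rows $a$ and $a+1$, giving a removable non-horizontal $\ell$-rim hook and directly contradicting the $\ell$-partition hypothesis. For $k \geq 2$, I would induct on $|\lambda|$: since $\lambda$ has a hook length divisible by $\ell$ it is not an $\ell$-core, so it admits a removable $\ell$-rim hook, which the $\ell$-partition hypothesis forces to be horizontal. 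Removing it yields a strictly smaller $\mu$, to which I would apply the inductive hypothesis after tracking the $(\star)$-violation into $\mu$.

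The main obstacle is this last step. A horizontal removal in some row $b$ shifts hook lengths by $-1$ in columns $[\mu_b+1,\lambda_b]$ and by $-\ell$ in the other columns of row $b$, so the column of the violation may shift and the parameter $k$ need not behave monotonically; in particular the failure may move out of column $c$ entirely. A cleaner way to package this bookkeeping is to work on the $\ell$-abacus, where horizontal slides correspond to gap-free bead moves and $(\star)$ translates into a combinatorial condition on the relative positions of beads on the $\ell$ runners; in this picture the inductive reduction should become transparent. The abacus reformulation also handles the auxiliary subtlety that a partition obtained from an $\ell$-regular $\lambda$ by removing a horizontal rim hook need not itself be $\ell$-regular, so the inductive hypothesis must be stated in a form robust to this.
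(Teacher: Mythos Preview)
This theorem is not proved in the present paper; it is quoted from \cite{BV}. The paper does, however, prove a strict generalization (Theorem~\ref{main_theorem_JM}: generalized $\ell$-partitions coincide with $(\ell,0)$-JM partitions), and that argument is the natural comparison point, since its restriction to $\ell$-regular partitions is exactly the statement at hand.

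The paper's technique differs structurally from yours. Rather than tracing how a hook-length violation in a horizontal descendant $\mu$ lifts back to $\lambda$ through shifts of $0$, $1$, or $\ell$, the paper isolates a single lemma (Lemma~\ref{adding}): if $\nu$ fails the hook condition, then so does any partition obtained from $\nu$ by adding a horizontal or vertical $\ell$-rim hook. With this in hand, the forward direction becomes: strip rim hooks from $\lambda$ until a bad configuration appears, exhibit an explicit violating triple there, and re-add the rim hooks one at a time via Lemma~\ref{adding}. For the converse, the paper takes a minimal counterexample, uses Lemma~\ref{rearrange} to pin a violating triple with the divisible box in the northwest corner, and then strips horizontal/vertical rim hooks to force a contradiction. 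Both directions thereby avoid the column-by-column shift bookkeeping you describe.

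Your outline is reasonable, and the base-case computation $h_{(a+1,c)}^{\lambda}=\ell-1-(\lambda_a-\lambda_{a+1})\in[1,\ell-1]$ is correct. But the steps you yourself label ``delicate'' and the ``main obstacle'' are not resolved: in the $\ell+1$ shift case you only gesture at a modular obstruction, and in the converse you have not shown that a $(\star)$-violation survives a horizontal removal (as you note, it need not remain in the same column, and $k$ need not decrease). Your ``WLOG $h_{(a,c)}^{\lambda}=k\ell$, $\ell\nmid h_{(a+1,c)}^{\lambda}$'' is also not justified, since a column may exhibit only the opposite adjacency pattern. These are genuine gaps; the paper's Lemma~\ref{adding} together with the minimal-counterexample reduction is precisely the missing ingredient that replaces this bookkeeping, and your abacus suggestion, while plausible, remains a suggestion.
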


Work of Lyle \cite{L} and Fayers \cite{F} settled the following conjecture of James and Mathas.
\begin{theorem}\label{JM_irred} Suppose $\ell > 2$. Let $\lambda$ be a partition. Then $S^{\lambda}$ is reducible if and only if there exist boxes (a,b) (a,y) and (x,b) in the Young diagram of $\lambda$ for which:
\begin{itemize}
\item $m_\ell(h_{(a,b)}^\lambda) =1$,
\item $m_\ell(h_{(a,y)}^\lambda) = m_\ell(h_{(x,b)}^\lambda)= 0$ .
\end{itemize}
\end{theorem}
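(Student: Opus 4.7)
Call a triple of boxes $(a,b), (a,y), (x,b) \in \lambda$ satisfying the three stated hook-divisibility conditions a \emph{bad configuration}. The plan is to prove the two implications of the equivalence separately.

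The easier direction is that the existence of a bad configuration forces reducibility of $S^\lambda$. Here I would invoke the Carter-Payne theorem, which provides an explicit non-trivial $H_n(q)$-homomorphism $S^\mu \to S^\lambda$ whenever $\mu$ is obtained from $\lambda$ by migrating boxes between two rows, subject to residue and hook-length conditions. Given a bad configuration at $(a,b)$, the divisibility $\ell \mid h_{(a,b)}^\lambda$ identifies a natural $\ell$-rim hook whose position one can shift, while the non-divisibilities at $(a,y)$ and $(x,b)$ translate precisely into the arm- and leg-length constraints required to apply Carter-Payne. The resulting proper submodule then certifies reducibility of $S^\lambda$.

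For the converse, that reducibility forces a bad configuration, I would use the Jantzen-Schaper sum formula. This formula expresses an alternating sum $\sum_{i \geq 1} [J_i(S^\lambda) : D^\mu]$, where $J_i$ is the Jantzen filtration, as an explicit combinatorial sum indexed by pairs of boxes from which one can excise an $\ell$-rim hook. The surviving non-zero summands can be placed in bijection, via an arm/leg-length analysis, with bad configurations in $\lambda$. Consequently, if no bad configuration exists, every Jantzen-Schaper sum vanishes, so $J_1(S^\lambda) = 0$ and $S^\lambda$ is simple.

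The main obstacle is the converse direction when $\lambda$ is not $\ell$-regular. For $\ell$-regular $\lambda$, the result follows from Theorem \ref{main_theorem_l_partitions} once one verifies that in the $\ell$-regular case the absence of a bad configuration is equivalent to $(\star)$; this is a short direct check on columns. In the general case, however, the Jantzen-Schaper sum contains substantially more terms and the required cancellations depend on a delicate pairing of hooks — matching which summands cancel in pairs and which survive — and this is the technical heart of the Lyle-Fayers arguments. The hypothesis $\ell > 2$ is essential, since for $\ell = 2$ there exist reducible Specht modules admitting no bad configuration, so the statement as formulated fails at $\ell = 2$.
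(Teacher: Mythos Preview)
The paper does not prove this theorem at all. Theorem~\ref{JM_irred} is stated in the ``Background and Previous Results'' subsection as a result from the literature, attributed to Lyle~\cite{L} and Fayers~\cite{F}, who settled a conjecture of James and Mathas. The paper simply cites it and then uses it as a black box (notably in the proof of Theorem~\ref{top_and_bottom_weak}). There is therefore no ``paper's own proof'' against which to compare your proposal.

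As for the content of your sketch: it is a reasonable high-level summary of the strategy actually used in the cited references --- Carter--Payne maps for one direction, the $q$-analogue of the Jantzen--Schaper formula for the other --- and you correctly identify that the non-$\ell$-regular case is where the real difficulty lies and that $\ell>2$ is genuinely needed. But what you have written is not a proof; it is an outline with the hard part explicitly deferred (``this is the technical heart of the Lyle--Fayers arguments''). If the intent were to supply an independent proof, you would need to carry out the hook-pairing and cancellation analysis in the Jantzen--Schaper sum, which is several pages of work. If the intent is merely to indicate why the theorem is true, a citation to \cite{L} and \cite{F} accomplishes the same thing, and that is precisely what the paper does.
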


A partition which has no such boxes is called an \textbf{$(\ell,0)$-JM partition}. 
Equivalently, $\lambda$ is an $(\ell,0)$-JM partition if and only if the Specht module 
$S^\lambda$ is irreducible.

\subsubsection{Ladders}
Let $\lambda$ be a partition and let $\ell>2$ be a fixed integer. For any box $(a,b)$ in the Young diagram of 
$\lambda$, the \textbf{ladder} of $(a,b)$ is the set of all positions $(c,d)$ (here $c,d \geq 1$ are integers) which satisfy $\frac{c-a}{d-b} =
 \ell-1$.

\begin{remark}
The definition implies that two boxes in the same ladder will share the same residue. An $i$-ladder will be a
 ladder which has residue $i$.
\end{remark}

\subsubsection{Regularization} Regularization is a map which takes a partition to a $p$-regular partition.  For a given $\lambda$, move all of the boxes up to the top of their respective ladders. 
The result is a partition, and that partition is called the \textbf{regularization} of $\lambda$, and is 
denoted $\mathcal{R} \lambda$. The following theorem contains facts about regularization originally 
due to James \cite{J} (see also \cite{JM}).
\begin{theorem}\label{reg_prop} Let $\lambda$ be a partition. Then
\begin{itemize}
\item $\mathcal{R} \lambda$ is $\ell$-regular
\item $\mathcal{R} \lambda = \lambda$ if and only if $\lambda$ is $\ell$-regular.
\end{itemize}
\end{theorem}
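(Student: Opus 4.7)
The plan is to handle the two bullet points separately, both by contradiction arguments that exploit the key geometric fact that if $(c,d)$ and $(c',d')$ lie in the same ladder with $c'<c$, then by definition of regularization the presence of $(c,d)$ in $\mathcal{R}\lambda$ forces the presence of $(c',d')$ (the higher position in the ladder) in $\mathcal{R}\lambda$.

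For the first bullet, I would first note that $\mathcal{R}\lambda$ is a Young diagram (this is a routine verification that regularization preserves the rows-and-columns weakly-decreasing property, and is standard from \cite{J}). Then suppose, toward contradiction, that $\mathcal{R}\lambda$ has $\ell$ equal consecutive rows of length $k$, say rows $i, i+1, \dots, i+\ell-1$. I would compare the two positions $(i+\ell-1, k)$ and $(i, k+1)$: a short calculation from the ladder equation shows they lie in the same ladder. Since $(i+\ell-1,k)\in\mathcal{R}\lambda$ and $(i,k+1)$ sits strictly higher in that ladder, the definition of regularization forces $(i,k+1)\in\mathcal{R}\lambda$. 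But row $i$ has length exactly $k$, so $(i,k+1)\notin\mathcal{R}\lambda$, contradiction. Hence no $\ell$ consecutive rows can coincide, i.e., $\mathcal{R}\lambda$ is $\ell$-regular.

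For the second bullet, the forward direction ($\mathcal{R}\lambda=\lambda \Rightarrow \lambda$ is $\ell$-regular) is immediate from the first bullet. For the converse, assume $\lambda$ is $\ell$-regular and suppose toward contradiction that $\mathcal{R}\lambda\ne\lambda$. Then some ladder $L$ must contain a position $(c,d)\notin\lambda$ such that the position $(c+\ell-1,d-1)$ immediately below it in $L$ lies in $\lambda$ (necessarily with $d\ge 2$, since for $d=1$ the next position down the ladder has column $0$ and fails to be a valid position, so no gap arises). Translating these two conditions into inequalities on part sizes gives $\lambda_c\le d-1$ and $\lambda_{c+\ell-1}\ge d-1$; combined with the monotonicity $\lambda_c\ge \lambda_{c+1}\ge\dots\ge \lambda_{c+\ell-1}$, this forces $\lambda_c=\lambda_{c+1}=\dots=\lambda_{c+\ell-1}=d-1$, so the part $d-1$ appears $\ell$ times in $\lambda$, contradicting $\ell$-regularity.

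The main obstacle is really the preliminary verification that $\mathcal{R}\lambda$ is a partition to begin with, since both arguments above implicitly use that $\mathcal{R}\lambda$ has well-defined row lengths. Once one has the partition property in hand, each direction boils down to a one-line ladder computation: the key identity $(i+\ell-1,k)$ and $(i,k+1)$ share a ladder is what makes consecutive equal rows collide with the top-of-ladder convention, and conversely the identity $\lambda_c=\cdots=\lambda_{c+\ell-1}$ forced by a ladder gap is exactly the obstruction to $\ell$-regularity. I expect no difficulty beyond this, as both arguments are essentially contrapositives of each other expressed through the same ladder-geometry lemma.
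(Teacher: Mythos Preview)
Your argument is correct, but note that the paper does not actually prove this theorem: it is stated as a known result due to James, with citations to \cite{J} and \cite{JM}, and no proof is given. So there is no proof in the paper to compare against; you have supplied a self-contained combinatorial argument where the paper simply appeals to the literature.

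A minor remark on conventions: your ladder computations use that adjacent positions in a ladder differ by $(\ell-1,-1)$ (down $\ell-1$ rows, left one column). This is the standard convention and the one actually used elsewhere in the paper (e.g.\ in the proof of Lemma~\ref{nodes_JM}), even though the displayed formula $\frac{c-a}{d-b}=\ell-1$ in the definition literally reads with the opposite sign; that is a typo in the paper, not an error in your reasoning. With that understood, both of your contradiction arguments are sound: the pair $(i+\ell-1,k)$ and $(i,k+1)$ really do share a ladder, and in the converse direction the forced equalities $\lambda_c=\cdots=\lambda_{c+\ell-1}=d-1\ge 1$ genuinely violate $\ell$-regularity. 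The one point you defer, that $\mathcal{R}\lambda$ is a Young diagram at all, is indeed the only nontrivial prerequisite and is standard.
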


Regularization provides us with an equivalence relation on the set of partitions. Specifically, we say $\lambda \sim \mu$ if $\mathcal{R} \lambda = \mathcal{R} \mu$. The equivalence classes are called \textbf{regularization classes}, and the class of a partition $\lambda$ is denoted $\mathcal{RC}(\lambda) := \{ \mu \in \mathcal{P} : \mathcal{R}\mu = \mathcal{R}\lambda \}$.

All of the irreducible representations of $H_n(q)$ have been constructed when $q$ is a primitive $\ell^{th}$ root of unity. These modules are indexed by $\ell$-regular partitions $\lambda$, and are called $D^{\lambda}$. $D^{\lambda}$ is the unique simple quotient of $S^{\lambda}$ (see \cite{DJ} for more details). In particular $D^{\lambda} = S^{\lambda}$ if and only if $S^{\lambda}$ is irreducible and $\lambda$ is $\ell$-regular. For $\lambda$ not necessarily $\ell$-regular, $S^{\lambda}$ is irreducible if and only if there exists an $\ell$-regular partition $\mu$ so that $S^{\lambda} \cong D^{\mu}$. An $\ell$-regular partition $\mu$ for which $S^\lambda = D^\mu$ for some $\lambda$ will be called a \textbf{weak $\ell$-partition}.

\begin{theorem}\label{deco}[James \cite{J}, \cite{J2}] Let $\lambda$ be any partition. Then
the irreducible representation $D^{\mathcal{R}\lambda}$ occurs as a multiplicity one composition factor of 
$S^\lambda$. In particular, if $\lambda$ is an $(\ell,0)$-JM partition, then $S^\lambda = D^{\mathcal{R}\lambda}$.
\end{theorem}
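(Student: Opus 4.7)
The plan is to separate the two assertions. The second is an immediate consequence of the first: if $\lambda$ is an $(\ell,0)$-JM partition then by Theorem \ref{JM_irred} the Specht module $S^\lambda$ is irreducible, so it has exactly one composition factor. Since the first assertion exhibits $D^{\mathcal{R}\lambda}$ as such a factor, we must have $S^\lambda \cong D^{\mathcal{R}\lambda}$.

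For the main assertion, I would first dispose of the case when $\lambda$ is already $\ell$-regular. Here Theorem \ref{reg_prop} gives $\mathcal{R}\lambda = \lambda$, and by the general theory of Specht modules for $H_n(q)$ recalled above (see \cite{DJ}), $D^\lambda$ is by definition the unique simple quotient of $S^\lambda$, and in particular appears with multiplicity one as a composition factor of $S^\lambda$.

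For general $\lambda$, I would proceed by induction on the number of boxes of $\lambda$ that are not yet at the top of their ladders. If that number is positive, one can always find a single box $b$ in $\lambda$ that is not at the top of its ladder but such that moving $b$ up one step along its ladder produces another honest partition $\lambda^{+}$; by construction $\mathcal{R}(\lambda^{+}) = \mathcal{R}\lambda$ and the inductive hypothesis applies to $\lambda^{+}$. The crucial technical step is then to prove the invariance
\[ [S^\lambda : D^{\mathcal{R}\lambda}] \;=\; [S^{\lambda^{+}} : D^{\mathcal{R}\lambda}] . \]
Iterating, the left side equals $[S^{\mathcal{R}\lambda}:D^{\mathcal{R}\lambda}]=1$, finishing the proof.

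The main obstacle is establishing this step-invariance, which is the content of James's original regularization theorem in \cite{J}. One concrete route is to apply the Jantzen--Schaper formula to both $S^\lambda$ and $S^{\lambda^+}$: each side becomes an alternating sum indexed by pairs of removable/addable nodes on boundary rim hooks, and one checks that the contributions involving $\mathcal{R}\lambda$ match up bijectively because the single ladder-move $\lambda \mapsto \lambda^{+}$ does not disturb the $\ell$-residue structure relevant to the pairs. A more conceptual alternative uses canonical bases on the Fock space: the regularization of $\lambda$ is exactly the leading $\ell$-regular partition appearing in the canonical basis element $G(\lambda)$, and specialization at $v=1$ identifies that leading coefficient with the decomposition number in question, which is $1$. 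Either route recovers the result of \cite{J,J2}.
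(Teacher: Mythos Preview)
The paper does not prove Theorem~\ref{deco} at all; it is stated as a result of James and attributed to \cite{J,J2} with no argument given. So there is no ``paper's own proof'' to compare against --- the paper simply quotes the theorem and uses it.

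Your derivation of the second assertion from the first is correct and is exactly the way the paper implicitly uses the result later on. Your treatment of the $\ell$-regular base case is also fine. However, the heart of the matter is the step-invariance $[S^\lambda : D^{\mathcal{R}\lambda}] = [S^{\lambda^{+}} : D^{\mathcal{R}\lambda}]$, and here your proposal is really a deferral rather than a proof. You yourself identify this as ``the content of James's original regularization theorem in \cite{J}'', which puts you in the same position as the paper: citing James. The two alternative routes you sketch each have a gap. The Jantzen--Schaper formula in general yields only upper bounds for decomposition numbers, not equalities, so extracting the exact multiplicity~one from it requires an additional argument you have not supplied. The Fock-space route is more promising, but the assertion that $\mathcal{R}\lambda$ is the leading $\ell$-regular partition in $G(\lambda)$ with coefficient~$1$ is, via Ariki's theorem, equivalent to the statement you are trying to prove; unless you can establish that combinatorial property of the canonical basis by an independent argument, invoking it is circular. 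A minor further point: the claim that one can always move a \emph{single} box one step up its ladder and remain a partition is not entirely obvious and would also need justification.
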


\section{Classifying $(\ell,0)$-JM partitions by their Removable $\ell$-Rim Hooks}\label{removing}
\subsection{Motivation}
In this section we give a new description of $(\ell,0)$-JM partitions. This condition is related to how $\ell$-rim hooks are removed from a partition and is a generalization of Theorem 2.1.6 in \cite{BV} about $\ell$-partitions. The condition we give will be used in several proofs throughout this paper. 
\subsection{Removing $\ell$-Rim Hooks and $(\ell,0)$-JM partitions}
\begin{definition}\label{generalize_l_partition} Let $\lambda$ be a partition. Let $\ell > 2$. Then $\lambda$ is a \textbf{generalized $\ell$-partition} if:
\begin{enumerate} 
\item $\lambda$ has only horizontal and vertical $\ell$-rim hooks;
\item\label{gen_condition} for any vertical (resp. horizontal) $\ell$-rim hook $R$ of $\lambda$ and any horizontal (resp. vertical) $\ell$-rim hook $S$ of $\lambda \setminus R$, $R$ and $S$ are not adjacent;
\item after removing any set of horizontal and vertical $\ell$-rim hooks from the Young diagram of $\lambda$, the remaining partition satisfies (1) and (2).
\end{enumerate}
\end{definition}

\begin{example} \label{6111}
Let $\lambda = (3,1,1,1)$. $\lambda$ has a vertical $3$-rim hook $R$ containing the boxes $(2,1), (3,1), (4,1)$.  Removing R leaves a horizontal $3$-rim hook $S$ containing the boxes $(1,1), (1,2), (1,3)$. $S$ is adjacent to $R$, so $\lambda$ is not a generalized $3$-partition.

\begin{center}
$\tableau{ S & S& S \\ R \\ R \\ R}$
\end{center}
\end{example}

\begin{remark}
We will sometimes abuse notation and say that $R$ and $S$ in Example \ref{6111} are adjacent vertical and horizontal $\ell$-rim hooks. The meaning here is not that they are both $\ell$-rim hooks of $\lambda$ ($S$ is not an $\ell$-rim hook of $\lambda$), but rather that they are an example of a violation of 
condition \ref{gen_condition} from Definition \ref{generalize_l_partition}.
\end{remark}

We will need a few lemmas before we come to our main theorem of this section, which states that the notions of $(\ell,0)$-JM partitions and generalized $\ell$-partitions are equivalent.
The next lemma simplifies the condition for being an $(\ell,0)$-JM partition and is used in the proof of Theorem \ref{main_theorem_JM}.

\begin{lemma}\label{rearrange}
Suppose $\lambda$ is not an $(\ell,0)$-JM partition. Then there exist boxes $(c,d)$, $(c,w)$ and $(z, d)$ with $c < z$, $d < w$, and
 $\ell \mid h_{(c,d)}^{\lambda}$, $\ell \nmid h_{(c,w)}^{\lambda}, h_{(z,d)}^{\lambda}$.
 \end{lemma}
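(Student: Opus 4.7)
The plan is to reduce the lemma to finding a single \emph{strictly interior} divisible box, namely a box $(c,d) \in \lambda$ with $\ell \mid h^\lambda_{(c,d)}$, $\lambda_c > d$, and $\lambda'_d > c$. Given such $(c,d)$, setting $w := \lambda_c$ and $z := \lambda'_d$ satisfies the conclusion immediately: by interiority $c < z$ and $d < w$, the boxes $(c,w),(z,d)$ lie in $\lambda$, and $h^\lambda_{(c,\lambda_c)} = h^\lambda_{(\lambda'_d,d)} = 1$ is not divisible by $\ell \geq 3$.

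To produce the interior divisible box, I start with a triple $(a,b), (a,y), (x,b)$ from Theorem \ref{JM_irred} and case on whether $(a,b)$ is already interior. If $\lambda_a > b$ and $\lambda'_b > a$, then $(c,d) := (a,b)$ works. Otherwise, since $h^\lambda_{(a,b)} \geq \ell > 1$ rules out $(a,b)$ being a corner, exactly one of $\lambda_a = b$ or $\lambda'_b = a$ holds; because $h^{\lambda'}_{(j,i)} = h^\lambda_{(i,j)}$ (so divisibility and interiority are transpose-symmetric), I may assume WLOG that $\lambda_a = b$. Then $y < b$, and writing $h^\lambda_{(a,b)} = \lambda'_b - a + 1 = k\ell$ with $k \geq 1$ forces rows $a, a+1, \ldots, a+k\ell-1$ each to have length exactly $b$. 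Let $r := h^\lambda_{(a,y)} \bmod \ell \in \{1,\ldots,\ell-1\}$. The constancy of row lengths on that range gives $h^\lambda_{(a+r,y)} = h^\lambda_{(a,y)} - r$, which is divisible by $\ell$; row interiority $\lambda_{a+r} = b > y$ is automatic; and column interiority $\lambda'_y > a+r$ follows from $\lambda'_y \geq \lambda'_b = a + k\ell - 1$ in every case except when $k = 1$, $r = \ell - 1$, and $\lambda'_y = a + \ell - 1$.

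The main obstacle is this last sub-case, where the candidate $(a+r,y) = (a+\ell-1,y)$ lands on the bottom of its column. Here $r = \ell - 1$ combined with the formula $h^\lambda_{(a,y)} = (b-y) + (\lambda'_y - a) + 1 = b - y + \ell$ forces $b - y \equiv -1 \pmod{\ell}$, so $b - y \geq \ell - 1 \geq 2$; monotonicity of $\lambda'$ further forces $\lambda'_j = a + \ell - 1$ for all $y \leq j \leq b$. I then exhibit the box $(c,d) := (a+1, b-1)$ directly: it lies in $\lambda$ since $b - 1 \geq y + 1$ and row $a+1$ still has length $b$, its hook length computes as
\[
h^\lambda_{(a+1,b-1)} = 1 + \bigl(\lambda'_{b-1} - (a+1)\bigr) + 1 = (\ell - 2) + 2 = \ell,
\]
and the bounds $\lambda_{a+1} = b > b-1$ together with $\lambda'_{b-1} = a + \ell - 1 > a + 1$ (using $\ell > 2$) confirm interiority. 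This handles the final sub-case and completes the argument.
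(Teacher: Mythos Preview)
Your opening reduction is wrong. You assert that for a strictly interior box $(c,d)$ the choices $w=\lambda_c$ and $z=\lambda'_d$ give $h^\lambda_{(c,\lambda_c)}=h^\lambda_{(\lambda'_d,d)}=1$. This is false: $(c,\lambda_c)$ is merely the rightmost box in row $c$, and its hook length is $\lambda'_{\lambda_c}-c+1$, which equals $1$ only when $(c,\lambda_c)$ happens to be a removable corner. The same issue arises for $(\lambda'_d,d)$.

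A concrete failure: take $\ell=3$ and $\lambda=(6,6,6)$. This partition is not a $(3,0)$-JM partition (e.g.\ $h_{(1,3)}=6$, $h_{(1,1)}=8$, $h_{(3,3)}=4$), and the box $(c,d)=(1,3)$ is strictly interior with $3\mid h_{(1,3)}$. Your recipe gives $w=\lambda_1=6$, but $h^\lambda_{(1,6)}=3$ is divisible by $3$, so the conclusion fails for that choice. Thus ``find an interior divisible box'' is \emph{not} enough to conclude the lemma; you still have to locate a box strictly to the right and a box strictly below whose hook lengths are not divisible by $\ell$, and this is exactly the nontrivial content. Your subsequent case analysis (the $\lambda_a=b$ branch and the $k=1$, $r=\ell-1$ sub-case) does seem to produce an interior divisible box correctly, but everything rests on the broken first step, so as written the argument does not prove the lemma. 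The paper's proof confronts this directly: in each case it tracks, simultaneously, a divisible hook and specific non-divisible hooks strictly to the right and strictly below, rather than separating the two tasks.
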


\begin{proof}
By assumption there exist boxes $(a,b)$, $(a,y)$ and $(x, b)$ where $\ell \mid h_{(a,b)}^{\lambda}$ and $\ell \nmid h_{(a,y)}^{\lambda}, h_{(x,b)}^{\lambda}$. If $a<x$ and $b<y$ then we are done. The other cases follow below:

Case 1: $x<a$ and $y<b$. Assume no triple exists satisfying the statement of the lemma. Then either all boxes to the right of the $(a,b)$ box will have hook lengths divisible by $\ell$, or all boxes below will. Without loss of generality, suppose that all boxes below the $(a,b)$ box have hook lengths divisible by $\ell$. Let $c<a$ be the largest integer so that $\ell \nmid h_{(c,b)}$. Let $z = c+1$. Then one of the boxes $(c,b+1), (c,b+2), \dots (c,b+\ell-1)$ has a hook length divisible by $\ell$. This is because the box $(h,b)$ at the bottom of column $b$ has a hook length divisible by $\ell$, so the hook lengths $h_{(c,b)}^{\lambda} = h_{(c,b+1)}^{\lambda} +1 = \dots = h_{(c, b+\ell-1)}^{\lambda} + \ell -1$. Suppose it is $(c,d)$. Then $\ell \nmid h_{(z,d)}^{\lambda}$ since $h_{(z,b)} = h_{(z,d)} + d-b$ and $d-b < \ell$.

 If $d \neq b+ \ell-1$ or $h_{(h,b)}^{\lambda} > \ell$ then letting $w = d+1$ gives $(c, w)$ to the right of $(c,d)$ so that $\ell \nmid h_{(c,w)}^{\lambda}$ (in fact $h_{(c,w)}^{\lambda} = h_{(c,d)}^{\lambda} - 1$).

If $d = b+\ell-1$ and $h_{(h,b)}^{\lambda} = \ell$ then there is a box in position $(c, d+1)$ with hook length
 $h_{(c,d+1)}^{\lambda} = h_{(c,d)}^{\lambda} - 2$ since there must be a box in the position $(h-1, d+1)$, 
due to the fact that $\ell \mid h_{(h-1, b)}^\lambda$ and $h_{(h-1, b)}^\lambda > \ell$ if $h-1 \neq c$ and $h_{(h-1,d)}^\lambda >\ell$ if $h-1=c$. Letting $w = d+1$ 
again yields $\ell \nmid h_{(c,w)}^\lambda$. Note that this requires that $\ell > 2$. In fact if $\ell =2$ 
we cannot even be sure that there is a box in position $(c, d+1)$. 

Case 2: $x<a$ and $y>b$.
If there was a box $(n,b)$ ($n > a$) with a hook length not divisible by $\ell$ then we would be done. So we can assume that all hook lengths in column $b$ below row $a$ are divisible by $\ell$. Let $c<a$ be the largest integer so that $\ell \nmid h_{(c,b)}^{\lambda}$. Let $z = c+1$. Similar to Case 1 above, we find a $d$ so that $\ell \mid h_{(c,d)}^\lambda$. Then $\ell \nmid h_{(z,d)}^\lambda$ and by the same argument as in Case 1, if we let $w = d+1$ then $\ell \nmid h_{(c,w)}^\lambda$. 

Case 3: $x>a$ and $y<b$. Then apply Case 2 to $\lambda'$.
\end{proof}

\begin{lemma}\label{adding}
Suppose $\lambda$ is not an $(\ell,0)$-JM partition. Then a partition obtained from $\lambda$ by adding a horizontal or
 vertical $\ell$-rim hook is also not an $(\ell,0)$-JM partition.
\end{lemma}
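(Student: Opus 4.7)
The plan is to apply Lemma \ref{rearrange} to extract a ``northwest'' witnessing triple of boxes for $\lambda$, and then track how the hook lengths at those positions transform when the rim hook is added. For a horizontal $\ell$-rim hook attached at the end of row $r$, the resulting partition $\mu$ satisfies $\mu_r=\lambda_r+\ell$ and $\mu'_j=\lambda'_j+1$ precisely for $j\in\{\lambda_r+1,\ldots,\lambda_r+\ell\}$, while every other part and every other conjugate part coincides with that of $\lambda$. From the formula $h^{\sigma}_{(i,j)}=\sigma_i-j+\sigma'_j-i+1$ one reads off that, for $(i,j)\in\lambda$,
\[ h^\mu_{(i,j)} - h^\lambda_{(i,j)} \;=\; \ell\cdot[i=r] \;+\; [\,i\neq r\text{ and }\lambda_r<j\leq\lambda_r+\ell\,]. \]
Thus every hook length is preserved modulo $\ell$, except for boxes lying in the newly opened columns but in a row different from $r$, whose hook lengths shift by exactly one.

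Now take the triple $(c,d),(c,w),(z,d)$ in $\lambda$ from Lemma \ref{rearrange}, with $c<z$, $d<w$, $\ell\mid h^\lambda_{(c,d)}$, and $\ell\nmid h^\lambda_{(c,w)},h^\lambda_{(z,d)}$. I would case-split on $r$. If $r\in\{c,z\}$, then the only affected hook length among the three is the one in row $r$, which shifts by a multiple of $\ell$, so the triple still witnesses non-JM in $\mu$. If $r\notin\{c,z\}$ and neither of the columns $d,w$ lies in $(\lambda_r,\lambda_r+\ell]$, every hook length in the triple is untouched, and the triple again persists. What remains is the case in which the rim hook's new columns include $d$ or $w$; since $(c,d),(z,d)\in\lambda$ force $\lambda_c,\lambda_z\geq d$, this case only arises when $c,z<r$, and a new box $(r,d)\in\mu$ is created.

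In this third case I would replace the triple by one involving $(r,d)$ (or, when only $w$ falls in the new columns, a box in row $r$ and column $w$) and exploit the fact that the newly added column range $(\lambda_r,\lambda_r+\ell]$ contains exactly $\ell$ consecutive integers, so within it one is guaranteed a column $d'$ where $\ell \mid h^\mu_{(c,d')}$ while the neighboring column remains non-divisible. This is the same counting argument that drives Case 1 of the proof of Lemma \ref{rearrange}, and is the main obstacle of the proof: once a column of the original triple falls into the rim hook's range the pivot divisibility is destroyed by the $+1$ shift, so a fresh pivot must be located by sliding within the block of $\ell$ new columns. For a vertical $\ell$-rim hook, since both the $(\ell,0)$-JM condition and the rearrangement of Lemma \ref{rearrange} are invariant under conjugation $\lambda\mapsto\lambda'$, adding a vertical $\ell$-rim hook to $\lambda$ is the same as adding a horizontal one to $\lambda'$; the horizontal case therefore yields the vertical case by transposition.
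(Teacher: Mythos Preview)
Your approach mirrors the paper's: invoke Lemma~\ref{rearrange}, observe that for boxes of $\lambda$ the hook length changes by $0$, $1$, or $\ell$ under the addition (by $1$ exactly when the box sits in a column of the new strip but not in row $r$), and when a column of the witnessing triple falls into the new range, relocate the pivot using the fact that consecutive columns above $R$ have hook lengths differing by one --- the paper compresses this last step into a single sentence, and your write-up supplies more of the mechanism, including the transpose reduction for the vertical case. One small bookkeeping slip: your case $r=z$ is not closed when $w\in(\lambda_r,\lambda_r+\ell]$, since then $h^\mu_{(c,w)}$ also shifts by $1$; that subcase belongs in your third case rather than your first, and is handled by the same sliding argument.
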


\begin{proof}
Let us suppose that we are adding a horizontal $\ell$-rim hook $R$ to a row $r$ in $\lambda$ to produce a partition $\mu$. By Lemma \ref{rearrange}, we can assume that there are boxes $(c,d)$, $(c,w)$ and $(z,d)$ as stated in the lemma. The only complication arises when $R$ is directly below one or more of these boxes. When this is the case, the fact that $R$ is completely horizontal implies that adjacent boxes also below $R$ will have hook lengths which differ by exactly one. This allows us to find new boxes $(c,d)$, $(c,w)$ and $(z,d)$ which satisfy Lemma \ref{rearrange}. Therefore $\mu$ is also not an $(\ell,0)$-JM partition.
\end{proof}

\begin{theorem}\label{main_theorem_JM} A partition is an $(\ell,0)$-JM partition if and only if it is a generalized $\ell$-partition.
\end{theorem}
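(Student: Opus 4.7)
The plan is to prove each direction of the equivalence separately, relying on Lemmas \ref{rearrange} and \ref{adding} to bridge between JM violations and rim hook configurations.

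For the forward direction, I assume $\lambda$ is $(\ell,0)$-JM and verify the three conditions of Definition \ref{generalize_l_partition}. Condition (1) is immediate: if $\lambda$ admits a removable $\ell$-rim hook at $(a,c)$ that is neither horizontal nor vertical, then the arm $\alpha$ and leg $\beta$ of the hook at $(a,c)$ both exceed zero (with $\alpha + \beta + 1 = \ell$), so the three boxes $(a,c), (a,c+1), (a+1,c)$ lie in $\lambda$, $h_{(a,c)}^\lambda = \ell$ is divisible by $\ell$, and $h_{(a,c+1)}^\lambda, h_{(a+1,c)}^\lambda$ both lie in $[1, \ell-1]$, giving a JM violation. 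For condition (2), take a vertical $\ell$-rim hook $R$ in column $c$, rows $a$ through $a + \ell - 1$, with an adjacent horizontal $\ell$-rim hook $S$ of $\lambda \setminus R$; the other case is symmetric by transposition. Adjacency forces $S$ into one of two positions: row $a - 1$ spanning columns $c$ through $c + \ell - 1$, or row $a + \ell - 1$ spanning columns $c - \ell$ through $c - 1$. In the first case, the triple $(a-1,c), (a-1,c+1), (a+1,c)$ has hook lengths $2\ell, \ell - 1, \ell - 1$; in the second, the triple $(a, c - \ell), (a, c - \ell + 1), (a+1, c - \ell)$ has hook lengths $2\ell, 2\ell - 1, 2\ell - 1$. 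Each is a JM violation. Condition (3) follows from the contrapositive of Lemma \ref{adding}: removing any horizontal or vertical $\ell$-rim hook from an $(\ell,0)$-JM partition produces another $(\ell,0)$-JM partition, and conditions (1) and (2) then propagate through all such reductions.

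For the backward direction, I induct on $|\lambda|$. If $\lambda$ is an $\ell$-core, no hook length is divisible by $\ell$, so $\lambda$ is trivially $(\ell,0)$-JM. Otherwise, condition (1) provides a horizontal or vertical $\ell$-rim hook $R$, and condition (3) guarantees $\lambda \setminus R$ is again a generalized $\ell$-partition, so by induction $\lambda \setminus R$ is $(\ell,0)$-JM. To conclude that $\lambda$ itself is $(\ell,0)$-JM, suppose otherwise; Lemma \ref{rearrange} yields boxes $(c, d), (c, w), (z, d)$ with $c < z$, $d < w$, $\ell \mid h_{(c, d)}^\lambda$, and $\ell \nmid h_{(c, w)}^\lambda, h_{(z, d)}^\lambda$. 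The presence of $(c, w)$ and $(z, d)$ in $\lambda$ forces both the arm and the leg of the hook at $(c, d)$ to be at least $1$. If $h_{(c, d)}^\lambda = \ell$ then the rim hook at $(c, d)$ is a non-horizontal, non-vertical $\ell$-rim hook of $\lambda$, immediately contradicting condition (1). If $h_{(c, d)}^\lambda > \ell$, I take $R$ horizontal in row $r$, columns $e$ through $e + \ell - 1$ (the vertical case symmetric) and compute that removing $R$ changes hook lengths by $-1$ when $a < r$ and $b \in [e, e + \ell - 1]$, by $-\ell$ when $a = r$ and $b < e$, and leaves them unchanged otherwise. A case analysis on which of the three boxes have their divisibility status altered then yields either a surviving JM violation in $\lambda \setminus R$ (contradicting the induction hypothesis) or a geometric configuration that forces a forbidden rim hook structure in $\lambda$, namely a non-horizontal-non-vertical $\ell$-rim hook or an adjacent horizontal/vertical pair, contradicting (1) or (2) of the generalized $\ell$-partition definition.

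The main obstacle is the second half of the backward induction, where $h_{(c, d)}^\lambda > \ell$: one must trace through the sub-cases distinguishing which hook lengths among the three change divisibility upon removing $R$, and in each sub-case exhibit either a JM violation inherited by $\lambda \setminus R$ or a forbidden rim hook configuration in $\lambda$. The forward direction's condition (2) similarly requires careful hook length computations at prescribed positions, though the underlying geometry is far more constrained and the analysis terminates in two clean sub-cases.
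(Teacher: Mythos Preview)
Your forward direction is correct and essentially parallels the paper: the paper argues the contrapositive (strip rim hooks until a bad $\mu$ appears, exhibit a JM violation there, then push it back up via Lemma~\ref{adding}), while you verify conditions (1)--(3) directly, but the hook-length computations and the use of Lemma~\ref{adding} are the same.

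The backward direction has a genuine gap, and it is precisely the one you flag in your final paragraph. For $h_{(c,d)}^\lambda > \ell$ you only outline a case analysis on which of the three hook lengths change divisibility after removing $R$, promising that each sub-case yields either a JM violation in $\lambda \setminus R$ or a forbidden rim-hook configuration in $\lambda$. This is not obviously true. For a concrete obstruction, take $R$ horizontal in row $r = c$ with $d < e \le w$. Then $(c,w)$ is deleted, $h_{(c,d)}$ drops by $\ell$ (hence stays divisible), and $h_{(z,d)}$ is unchanged; to rebuild a JM violation in $\lambda \setminus R$ you would need some $w'$ with $d < w' < e$ and $\ell \nmid h_{(c,w')}^\lambda$, but nothing you have assumed guarantees such a $w'$ exists, and it is not apparent which forbidden rim-hook configuration in $\lambda$ would be forced instead. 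Several other sub-cases present analogous difficulties, and there is no evident uniform mechanism for producing either the replacement box or the forbidden configuration.

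The paper sidesteps all of this with one extra reduction before the induction: pass to the sub-partition $\mu$ consisting of all boxes weakly south-east of the violating box, observe that $\mu$ is still a generalized $\ell$-partition carrying the same JM violation now at its corner $(1,1)$, and invoke minimality to force $\mu = \lambda$. With $(c,d) = (1,1)$ the bookkeeping collapses: removing a horizontal or vertical $\ell$-rim hook changes $h_{(1,1)}$ by $0$, $1$, or $\ell$; a change of $1$ means the rim hook was an entire last row (or column) of length exactly $\ell$, and an explicit shifted violation in the smaller partition is written down directly; otherwise $h_{(1,1)}$ stays divisible by $\ell$ throughout, so one keeps removing rim hooks until $(1,1)$ itself disappears, which (since both $(x,1)$ and $(1,y)$ must eventually be removed) forces an adjacent horizontal/vertical pair. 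This reduction to the corner is the idea your argument is missing.
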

\begin{proof}
Suppose that $\lambda$ is not a generalized $\ell$-partition.
 Then remove non-adjacent horizontal and vertical $\ell$-rim hooks until you obtain a partition 
$\mu$ which has either a non-vertical non-horizontal $\ell$-rim hook, or adjacent horizontal and 
vertical $\ell$-rim hooks. If there is a non-horizontal, non-vertical $\ell$-rim hook in $\mu$, 
let's say the $\ell$-rim hook has southwest most box $(a,b)$ and northeast most box $(c,d)$. 
Then $\ell \mid h_{(c,b)}^{\mu}$ but  
$\ell \nmid h_{(a,b)}^{\mu} , h_{(c,d)}^{\mu}$ since $h_{(a,b)}^\mu, h_{(c,d)}^\mu < \ell$. Therefore, $\mu$ is not an $(\ell,0)$-JM partition. By Lemma 
\ref{adding}, $\lambda$ is not an $(\ell,0)$-JM partition. Similarly, if $\mu$ has adjacent vertical
 and horizontal $\ell$-rim hooks, then let $(a,b)$ be the southwest most box in the vertical $\ell$-rim
 hook and let $(c,d)$ be the position of the northeast most box in the horizontal $\ell$-rim
 hook (we may assume that the horizontal rim hook is to the north east of the vertical one, otherwise the pair would also form a non-vertical, non-horizontal $\ell$-rim hook). Again, $\ell \mid h_{(c,b)}^{\mu}$ but  $\ell \nmid h_{(a,b)}^{\mu} , h_{(c,d)}^{\mu}$. 
Therefore $\mu$ cannot be an $(\ell,0)$-JM partition, so $\lambda$ is not an $(\ell,0)$-JM partition.

Conversely, let $n$ be the smallest integer such that there exists a partition $\lambda \vdash n$ which is not 
an $(\ell,0)$-JM partition but is a generalized
 $\ell$-partition. Then by Lemma \ref{rearrange} there are boxes $(a,b)$, $(a,y)$ and $(x,b)$ 
with $a < x$ and $b < y$, which satisfy $\ell \mid h_{(a,b)}^{\lambda}$, and 
$\ell \nmid h_{(a,y)}^\lambda, h_{(x,b)}^\lambda$ . Form a new partition $\mu$ by taking all of the 
boxes $(m,n)$ in $\lambda$ such that $m \geq a$ and $n \geq b$. Since $\lambda$ was a generalized $\ell$-partition, 
$\mu$ must be also. If $\mu \neq \lambda$ then we have found a partition $\mu \vdash m$ for $m <n$, which is a contradiction.
So we may assume that $a, b =1$. 

From the definition of $\ell$-cores, we know that there must exist a removable 
$\ell$-rim hook from $\lambda$, since $\ell \mid h_{(1,1)}^{\lambda}$. Since $\lambda$ is a generalized 
$\ell$-partition, the $\ell$-rim hook must be either horizontal or vertical.
 Without loss of generality, suppose we have a horizontal $\ell$-rim 
hook which can be removed from $\lambda$. Let the resulting partition be denoted $\nu$. 

If $h_{(1,1)}^\nu = h_{(1,1)}^\lambda - 1$, then the horizontal $\ell$-rim hook was removed from the last
 row of $\lambda$,
which was of length exactly $\ell$. If this is the case then $h_{(1,\ell)}^\lambda \equiv 1 \mod \ell$, 
$h_{(1,1)}^\lambda \equiv 0 \mod \ell$
and $h_{(x,\ell)}^\lambda \equiv h_{(x,1)}^\lambda +1 \mod \ell$. Hence $\ell \mid h_{(1,\ell)}^\nu$ 
(since $h_{(1,\ell)}^\nu = h_{(1,\ell)}^\lambda - 1$),
$\ell \nmid h_{(1,1)}^\nu$, $\ell \nmid h_{(x,\ell)}^\nu$. Therefore $\nu$ is not an $(\ell,0)$-JM partition,
 but it is 
a generalized $\ell$-partition. The existence of such a partition is a contradiction. So we know that removing a
horizontal $\ell$-rim hook from $\lambda$ cannot change the value of $h_{(1,1)}^\lambda$ by $1$. This is also true
for vertical $\ell$-rim hooks.

Now we may assume that removing horizontal or vertical 
$\ell$-rim hooks from $\lambda$ will not change that $\ell$ divides the hook length in the $(1,1)$ position (because removing 
each $\ell$-rim hook will change the hook length $h_{(1,1)}^\lambda$ by either $0$ or $\ell$). Therefore we 
can keep removing $\ell$-rim hooks until we have have removed box (1,1) entirely, in which case 
the remaining partition had a horizontal $\ell$-rim hook adjacent to a vertical $\ell$-rim hook 
(since both $(x,b)$ and $(a,y)$ must have been removed, the $\ell$-rim hooks could not have been 
exclusively horizontal or vertical). This contradicts $\mu$ being a generalized $\ell$-partition.
\end{proof}

\begin{example} Let $\lambda = (10,8,3,2^2, 1^5)$. Then $\lambda$ is a generalized 3-partition and a $(3,0)$-JM partition. $\lambda$ is drawn below with each hook length $h_{(a,b)}^\lambda$ written in the box $(a,b)$ and the possible removable $\ell$-rim hooks outlined. Also, hook lengths which are divisible by $\ell$ are underlined.

\begin{center}
$\tableau{ 19&13&10&8&7&\underline{6}&5&4&2&1\\
16&10&7&5&4&\underline{3}&2&1\\
10&4&1\\
8&2\\
7&1\\
5\\
4\\
\underline{3}\\
2\\
1
}
\linethickness{2.5pt}
\put (-180,-162){\line(0,1){54}}
\put (-162,-162){\line(0,1){54}}
\put (-180,-162){\line(1,0){18}}
\put (-180,-108){\line(1,0){18}}
\put (-54,0){\line(1,0){54}}
\put (-54,18){\line(1,0){54}}
\put (-54,0){\line(0,1){18}}
\put (0,0){\line(0,1){18}}
\put (-90,-18){\line(1,0){54}}
\put (-90,0){\line(1,0){54}}
\put (-90,-18){\line(0,1){18}}
\put (-36,-18){\line(0,1){18}}
$
\end{center}
\end{example}

\begin{lemma}\label{JMAAR}
An $(\ell,0)$-JM partition $\lambda$ cannot have a removable and two addable partitions of the same residue.
\end{lemma}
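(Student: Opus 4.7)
My plan is to prove the contrapositive via Theorem \ref{JM_irred}: if $\lambda$ has a removable $i$-residue box $R = (r,s)$ and two addable $i$-residue boxes $A_j = (a_j, b_j)$ for $j = 1, 2$, then I will exhibit a hook-length triple $(a,b),(a,y),(x,b)$ with $\ell \mid h_{(a,b)}^\lambda$ and $\ell \nmid h_{(a,y)}^\lambda, h_{(x,b)}^\lambda$, forcing $\lambda$ to not be an $(\ell,0)$-JM partition. After relabeling, take $a_1 < a_2$; the addable condition $\lambda_{a_j - 1} > \lambda_{a_j} = b_j - 1$ then also gives $b_1 > b_2$. The definitions immediately yield the identities $\lambda'_{b_2} = a_2 - 1$ and $\lambda'_s = r$, and the common residue gives the congruences $b_1 - a_1 \equiv b_2 - a_2 \equiv s - r \equiv i \pmod \ell$. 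Combined with the hook length formula $h_{(x,y)}^\lambda = \lambda_x - y + \lambda'_y - x + 1$, these make the mod-$\ell$ computations essentially automatic.

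A short argument excludes $r = a_1$ and $r = a_2$: either would equate $\lambda_{a_j} = s = b_j - 1$, so the residues $s - r$ and $b_j - a_j$ would differ by $-1$, forcing $-1 \equiv 0 \pmod \ell$, contradicting $\ell > 2$. Thus we are in one of three cases: (i) $r < a_1$, (ii) $a_1 < r < a_2$, or (iii) $r > a_2$. For each I propose the triple (i) $(r,b_2),\ (r,s),\ (a_1,b_2)$; (ii) $(a_1,s),\ (a_1,s+1),\ (r,s)$; (iii) $(a_1,s),\ (a_1,b_2),\ (r,s)$. In each triple the ``anchor'' hook length evaluates to either $(s-r) + (a_2 - b_2)$ or $(b_1 - a_1) + (r-s)$, both $\equiv 0 \pmod \ell$. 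The $(r,s)$-hook is always $1$. The remaining hook length evaluates to $-1 \pmod \ell$ in cases (i) and (iii) (via the calculation $(b_1-a_1) + (a_2-b_2) - 1$) and to $-2 \pmod \ell$ in case (ii), and is never divisible by $\ell$ since $\ell > 2$.

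The only nontrivial thing to verify is that the chosen boxes lie in $\lambda$, which is immediate from $\lambda_{a_1} = b_1 - 1 \geq b_2$ and $\lambda_r = s \geq b_2$ in cases (i) and (iii). The one subtle sub-issue is in case (ii), where $(a_1, s+1)$ requires $s \leq b_1 - 2$. The main obstacle of the proof is ruling out the borderline $s = b_1 - 1$: in this case $\lambda_{a_1} = s = \lambda_r$ would force $\lambda_j = s$ for all $a_1 \leq j \leq r$ by monotonicity, so in particular $\lambda_{r-1} = s$, contradicting $\lambda_{r-1} > \lambda_r = s$ which follows from $R$ being removable and $r > a_1 \geq 1$. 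Case (ii) also uses the identity $\lambda'_{s+1} = r - 1$, which comes from $\lambda_{r-1} \geq s+1$ and $\lambda_r < s+1$. With these boundary checks in place, the modular arithmetic above finishes the argument; the rest is mechanical.
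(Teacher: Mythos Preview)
Your proof is correct and follows essentially the same strategy as the paper's: a three-case analysis on the row position of the removable box relative to the two addable boxes, exhibiting in each case a triple of boxes violating the $(\ell,0)$-JM condition. Your cases (i) and (iii) use exactly the same triples as the paper; in case (ii) you instead anchor at $(a_1,s)$ and introduce $(a_1,s+1)$, which costs the extra verification that $s\leq b_1-2$ (handled correctly), whereas the paper more simply reuses the triple $(r,b_2),(r,s),(a_1,b_2)$ from case (i). Your write-up is more explicit than the paper's in checking box membership and computing residues, but the underlying argument is the same.
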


\begin{proof}

 Label the removable box $n_1$.  Label the addable boxes $n_2$ and $n_3$ 
(without loss of generality, $n_2$ is in a row above $n_3$). There are three cases to consider.

The first case is that $n_1$ is above $n_2$ and $n_3$. Then the hook length in the row of $n_1$
 and column of $n_3$ is divisible by $\ell$, but the hook length in the row of $n_2$ and column
 of $n_3$ is not. Also, the hook length for box $n_1$ is 1, which is not divisible by $\ell$. 

The second case is that $n_1$ is in a row between the row of $n_2$ and $n_3$. In this case, $\ell$
 divides the hook length in the row of $n_1$ and column of $n_3$. Also $\ell$ does not divide the
 hook length in the row of $n_2$ and column of $n_3$, and the hook length for the box $n_1$ is 1. 

The last case is that $n_1$ is below $n_2$ and $n_3$. In this case, $\ell$ divides the hook length
 in the column of $n_1$ and row of $n_2$, but $\ell$ does not divide the hook length in the column
 of $n_3$ and row of $n_2$. Also the hook length for the box $n_1$ is 1.

\end{proof}

\section{Decomposition of $(\ell,0)$-JM Partitions}\label{construct_JM} 
\subsection{Motivation} In \cite{BV} we gave a decomposition of $\ell$-partitions. In this section 
we give a similar decomposition for all $(\ell,0)$-JM partitions. This decomposition is important for 
the proofs of the theorems in later sections.

\subsection{Decomposing $(\ell,0)$-JM partitions}
Let $\mu$ be an $\ell$-core with $\mu_1-\mu_2 < \ell-1$ and $\mu_1'-\mu_2' < \ell-1$. Let $r,s \geq 0$. Let $\rho$ and $\sigma$ be partitions with $len(\rho) \leq r+1$ and $len(\sigma) \leq s+1$. If $\mu = \emptyset$ then we require at least one of $\rho_{r+1}, \sigma_{s+1}$ to be zero. Following the construction of \cite{BV}, we construct a partition corresponding to $(\mu,r,s,\rho,\sigma)$ as follows. Starting with $\mu$, attach $r$ rows above $\mu$, with each row $\ell-1$ boxes longer than the previous. Then attach $s$ columns to the left of $\mu$, with each column $\ell-1$ boxes longer than the previous. This partition will be denoted $(\mu,r,s,\emptyset,\emptyset)$. Formally, if $\mu = (\mu_1,\mu_2,\dots,\mu_m)$ then $(\mu,r,s,\emptyset,\emptyset)$ represents the partition (which is an $\ell$-core):  
\begin{center} $(s+\mu_1+r(\ell-1), s+\mu_1+ (r-1)(\ell-1), \dots, s+\mu_1+\ell-1, s+\mu_1, $

$s+\mu_2, \dots, 
s+\mu_m, s^{\ell-1}, (s-1)^{\ell-1}, \dots, 1^{\ell-1})$
\end{center}

where $s^{\ell-1}$ stands for $\ell-1$ copies of $s$.
Now to the first $r+1$ rows attach $\rho_i$ horizontal $\ell$-rim hooks to row $i$. Similarly, to the first $s+1$ columns, attach $\sigma_j$ vertical $\ell$-rim hooks to column $j$. The resulting partition $\lambda$ corresponding to $(\mu,r,s,\rho,\sigma)$ will be
\begin{center}

$\lambda = (s+\mu_1+r(\ell-1)+ \rho_1 \ell, s+\mu_1+(r-1)(\ell-1)+ \rho_2 \ell, \dots, $

$s+\mu_1+ (\ell-1) + \rho_r 
\ell,  s+\mu_1+\rho_{r+1}\ell, s+\mu_2, s+\mu_3, \dots, $

$s+\mu_m,
(s+1)^{\sigma_{s+1}\ell}, s^{\ell-1+(\sigma_s - \sigma_{s+1})\ell}, (s-1)^{\ell-1 + (\sigma_{s-1} - \sigma_s)\ell}, \dots , 1^{\ell-1 + (\sigma_1-\sigma_2)\ell}).$
\end{center}

We denote this decomposition as $\lambda \approx (\mu,r,s,\rho,\sigma)$.

\begin{example}

Let $\ell = 3$ and $(\mu,r,s,\rho,\sigma) = ((1),3,2,(2,1,1,1),(2,1,0))$. Then $((1),3,2, \emptyset, \emptyset)$ is drawn below, with $\mu$ framed.

\vspace{.3in}

$\begin{array}{cc}

\linethickness{2.5pt}
\put (45,-36){\line(1,0){18}}
\put (45,-54){\line(1,0){18}}
\put (45,-54){\line(0,1){18}}
\put (63,-54){\line(0,1){18}}
\linethickness{1pt}
\put (10,-140){\line(1,0){36}}
\put (10,-140){\line(0,1){6}}
\put (46,-140){\line(0,1){6}}
\put (15,-136){$s = 2$}
\put (190,-36){\line(0,1){54}}
\put (190,-36){\line(-1,0){6}}
\put (190,18){\line(-1,0){6}}
\put (200,-10){$r = 3$}
&
\tableau{\mbox{}&\mbox{}&\mbox{}&\mbox{}&\mbox{}&\mbox{}&\mbox{}&\mbox{}&\mbox{}\\
\mbox{}&\mbox{}&\mbox{}&\mbox{}&\mbox{}&\mbox{}&\mbox{}\\
\mbox{}&\mbox{}&\mbox{}&\mbox{}&\mbox{}\\
\mbox{}&\mbox{}&\mbox{}\\
\mbox{}&\mbox{}\\
\mbox{}&\mbox{}\\
\mbox{}\\
\mbox{}
}
\end{array}$

\vspace{.3in}

$((1),3,2,(2,1,1,1),(2,1,0))$ is drawn below, now with $((1),3,2, \emptyset, \emptyset)$ framed.

\vspace{.3in}

$\begin{array}{cc}
\linethickness{2.5pt}
\put (63,-36){\line(1,0){36}}
\put (45,-54){\line(1,0){18}}
\put (45,-90){\line(0,1){36}}
\put (63,-54){\line(0,1){18}}
\put (27,-90){\line(1,0){18}}
\put (27,-126){\line(0,1){36}}
\put (9,-126){\line(1,0){18}}
\put (9,-126){\line(0,1){144}}
\put (9,18){\line(1,0){162}}
\put (171,0){\line(0,1){18}}
\put (135,0){\line(1,0){36}}
\put (135,-18){\line(0,1){18}}
\put (99,-18){\line(1,0){36}}
\put (99,-36){\line(0,1){18}}

&
\tableau{\mbox{}&\mbox{}&\mbox{}&\mbox{}&\mbox{}&\mbox{}&\mbox{}&\mbox{}&\mbox{}&\mbox{}&\mbox{}&\mbox{}&\mbox{}&\mbox{}&\mbox{}\\
\mbox{}&\mbox{}&\mbox{}&\mbox{}&\mbox{}&\mbox{}&\mbox{}&\mbox{}&\mbox{}&\mbox{}\\
\mbox{}&\mbox{}&\mbox{}&\mbox{}&\mbox{}&\mbox{}&\mbox{}&\mbox{}\\
\mbox{}&\mbox{}&\mbox{}&\mbox{}&\mbox{}&\mbox{}\\
\mbox{}&\mbox{}\\
\mbox{}&\mbox{}\\
\mbox{}&\mbox{}\\
\mbox{}&\mbox{}\\
\mbox{}&\mbox{}\\
\mbox{}\\
\mbox{}\\
\mbox{}\\
\mbox{}\\
\mbox{}
}
\end{array}
$

\end{example}

\begin{theorem}\label{construct_JMs}
If $\lambda \approx (\mu,r,s,\rho,\sigma)$ (with at least one of $\rho_{r+1}, \sigma_{s+1} = 0$ if $\mu=\emptyset$), then $\lambda$ is an $(\ell,0)$-JM partition. Conversely, all $(\ell,0)$-JM partitions are of this form.
\end{theorem}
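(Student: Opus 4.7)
The plan is to apply Theorem \ref{main_theorem_JM} to replace the condition of being an $(\ell,0)$-JM partition by the combinatorial condition of being a generalized $\ell$-partition. Both directions can then be established by induction on $|\lambda|$, with the base case being when $\lambda$ is an $\ell$-core.

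For the forward direction, suppose $\lambda \approx (\mu,r,s,\rho,\sigma)$. I would verify that $\lambda$ is a generalized $\ell$-partition by inspecting its removable $\ell$-rim hooks directly. The staircase structure (top $r$ rows differing by exactly $\ell-1$, leftmost $s$ columns differing by exactly $\ell-1$), together with the smallness conditions $\mu_1 - \mu_2 < \ell - 1$ and $\mu_1' - \mu_2' < \ell - 1$, confines every $\ell$-rim hook of $\lambda$ either to a single row of the top strip or to a single column of the left strip: any non-rectilinear rim hook would have to turn at the corner between the two strips, but that corner is ``blocked'' by $\mu$ together with the $\ell-1$ staircase gap. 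This gives condition (1) of Definition \ref{generalize_l_partition}. Condition (2) follows similarly, since any vertical rim hook in the left strip is separated from any horizontal rim hook in the top strip by $\mu$. Condition (3) is handled by observing that removing a horizontal (resp.\ vertical) $\ell$-rim hook from row $i \leq r+1$ (resp.\ column $j \leq s+1$) produces a partition of the form $(\mu, r, s, \rho', \sigma)$ (resp.\ $(\mu, r, s, \rho, \sigma')$), so one can iterate.

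For the converse, let $\lambda$ be an $(\ell,0)$-JM partition, equivalently a generalized $\ell$-partition. If $\lambda$ is not an $\ell$-core, pick any removable horizontal or vertical $\ell$-rim hook $R$ (which exists by condition (1)) and set $\lambda' = \lambda \setminus R$. By Definition \ref{generalize_l_partition}(3) and Theorem \ref{main_theorem_JM}, $\lambda'$ is again an $(\ell,0)$-JM partition, so by induction $\lambda' \approx (\mu, r, s, \rho', \sigma')$. I would then argue that $R$ must sit in one of the top $r+1$ rows (if horizontal) or the left $s+1$ columns (if vertical) of $\lambda$: any other position would place $R$ adjacent to a vertical (resp.\ horizontal) rim hook already present in $\lambda'$, violating condition (2). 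Re-attaching $R$ thus corresponds to incrementing a single coordinate of $\rho'$ or $\sigma'$, and the resulting tuple is still a partition because $R$ was removable. The base case, when $\lambda$ itself is an $\ell$-core, reduces to showing that every $\ell$-core $\nu$ decomposes uniquely as $\nu = (\mu, r, s, \emptyset, \emptyset)$: I would take $r$ to be the largest integer with $\nu_i - \nu_{i+1} = \ell-1$ for $i = 1, \ldots, r$, define $s$ analogously for $\nu'$, and let $\mu$ be the partition obtained by stripping these $r$ staircase rows and $s$ staircase columns. Maximality of $r$ and $s$ then gives the smallness conditions on $\mu$, and that $\mu$ is itself an $\ell$-core follows from checking that stripping a staircase of length $\ell-1$ preserves the property that no hook length is divisible by $\ell$.

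The main obstacle, in my view, is the forward-direction case analysis ruling out non-horizontal/non-vertical rim hooks that could straddle $\mu$ together with one of the two staircase strips: that argument requires exploiting both the $\ell-1$ gap in the staircase and the smallness of $\mu$ \emph{together}, and it becomes delicate near the two corners where $\mu$ meets a strip. A secondary but unavoidable difficulty is the edge case $\mu = \emptyset$, where the decomposition is non-unique unless one imposes the hypothesis $\rho_{r+1}\sigma_{s+1} = 0$; this convention must be threaded consistently through the inductive step so that both directions use the same canonical parameters.
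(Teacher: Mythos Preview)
Your overall strategy---reduce to generalized $\ell$-partitions via Theorem~\ref{main_theorem_JM} and then argue combinatorially---is exactly what the paper does, and your forward direction is essentially the paper's (indeed, the paper simply asserts that it is ``clear by construction'' that $(\mu,r,s,\rho,\sigma)$ is a generalized $\ell$-partition, so your more careful case analysis is welcome).

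Where you diverge is in the converse. The paper does not induct: it strips \emph{all} horizontal and vertical $\ell$-rim hooks at once to reach the $\ell$-core $\nu$, records how many came from each row and column as $\rho_i,\sigma_j$, reads $r,s$ off the staircase of $\nu$, and sets $\mu$ to be what remains. The only thing left to check is $len(\rho)\le r+1$, and the paper's reason is different from yours: if $\rho_{r+2}>0$ then at some stage of the stripping the partition restricted to rows $r{+}1$ and $r{+}2$ carries a non-horizontal, non-vertical $\ell$-rim hook (because the core satisfies $\nu_{r+1}-\nu_{r+2}<\ell-1$), violating condition~(1) of Definition~\ref{generalize_l_partition}. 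Your stated reason---that a horizontal $R$ in row $k>r{+}1$ would be \emph{adjacent to a vertical rim hook of $\lambda'$}---does not work: if $\mu\neq\emptyset$ and $r{+}1<k\le r+len(\mu)$, the row containing $R$ ends in column $>s$, so $R$ is nowhere near the vertical strips in columns $1,\dots,s{+}1$, and there is nothing for it to be adjacent to. The obstruction is really a forbidden \emph{bent} rim hook appearing along the way (condition~(1)), not a horizontal--vertical adjacency (condition~(2)). Once you replace your adjacency argument with this observation, your inductive version and the paper's direct version become equivalent; the direct version is simply cleaner because it avoids tracking how the parameters $(\mu,r,s)$ behave across a single rim-hook removal.
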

\begin{proof}
First, note that $(\mu,r,s,\emptyset,\emptyset)$ is an $\ell$-core. This can be seen as no $\ell$-rim hooks can be removed from $\mu$, since $\mu$ is an $\ell$-core, so any $\ell$-rim hooks which can be removed from $(\mu,r,s,\emptyset,\emptyset)$ must contain at least one box in either the first $r$ rows or $s$ columns. But it is clear that no $\ell$-rim hook can go through one of these rows or columns.

If $\lambda \approx (\mu,r,s,\rho,\sigma)$ then it is clear by construction that $\lambda$ satisfies the criterion for a generalized $\ell$-partition (see Definition \ref{generalize_l_partition}). By Theorem \ref{main_theorem_JM}, $\lambda$ is an $(\ell,0)$-JM partition.

Conversely, if $\lambda$ is an $(\ell,0)$-JM partition then by Theorem \ref{main_theorem_JM} its only removable $\ell$-rim hooks are horizontal or vertical. Let $\rho_i$ be the number of removable horizontal $\ell$-rim hooks in row $i$ which are removed in going to the $\ell$-core of $\lambda$, and let $\sigma_j$ be the number of removable vertical $\ell$-rim hooks in column $j$ (since $\lambda$ has no adjacent $\ell$-rim hooks, these numbers are well defined). Once all $\ell$-rim hooks are removed, let $r$ (resp. $s$) be the number of rows (resp. columns) whose successive differences are $\ell-1$. It is then clear that $len(\rho) \leq r+1$, since if it wasn't then the two rows $r+1$ and $r+2$ would combine to form a non-vertical, non-horizontal $\ell$-rim hook. Similarly, $len(\sigma) \leq s+1$.
 Removing these topmost $r$ rows and leftmost $s$ columns leaves an $\ell$-core $\mu$. 
Then $\lambda \approx (\mu, r, s, \rho, \sigma)$. If $\mu =  \emptyset$ and $\rho_{r+1}, \sigma_{s+1}>0$ then $\lambda$
would have (after removal of horizontal and vertical $\ell$-rim hooks) a horizontal $\ell$-rim hook adjacent to a vertical
$\ell$-rim hook.
\end{proof}

Further in the text, we will make use of Theorem \ref{construct_JMs}. Many times we will show that a partition $\lambda$ is an $(\ell,0)$-JM partition by giving an explicit decomposition of $\lambda$ into $(\mu,r,s,\rho,\sigma)$.

\begin{remark}
 This decomposition can be used to count the number of $(\ell,0)$-JM partitions in a given block. For more details, see the author's Ph.D. thesis \cite{Bphd}.
\end{remark}

\section{Extending Theorems to the Crystal $ladd_\ell$}\label{extend}

In \cite{MM}, Misra and Miwa built a model (denoted here as $reg_\ell$) of the basic representation $B(\Lambda_0)$ of $\widehat{\mathfrak{sl}_\ell}$ using $\ell$-regular partitions as nodes of the graph. Their crystal operators $\widetilde{e_i}$ (resp. $\widetilde{f_i}$) are maps which remove (resp. add) a box to a partition.

In \cite{B}, I built a crystal model (denoted here as $ladd_\ell$) of $B(\Lambda_0)$ which had a certain type of partitions as nodes of the graph. The crystal operators of my model, named $\widehat{e_i}$ and $\widehat{f_i}$, removed and added boxes in a similar manner. I showed that my model was the basic crystal $B(\Lambda_0)$ by showing that the map $\mathcal{R}$ described above actually gave one direction of the crystal isomorhism (taking a partition in my model and making it $\ell$-regular). 

To be more specific, to a partition $\lambda$, and a residue $i \in \{0, \dots, \ell-1 \}$, we put a $-$ in every box of $\lambda$ which is removable and has residue $i$. We also put a $+$ in every position adjacent to $\lambda$ which is addable and has residue $i$. We make a word out of these $-$'s and $+$'s. In the Misra Miwa model, the word is read from the bottom of the partition to the top. In the ladder crystal model, the word is read from leftmost ladder to rightmost ladder, reading each ladder from top to bottom. The reduced word is then obtained by successive cancelation of adjacent pairs $-$ $+$. We can now define $\widetilde{e_i} \lambda$ (resp. $\widehat{e_i} \lambda$) as the partition obtained by removing from $\lambda$ the box corresponding to the leftmost $-$ in the reduced word of the Misra Miwa ordering (resp. ladder ordering). Similarly, $\widetilde{f_i } \lambda$ (resp. $\widehat{f_i} \lambda$) is the partition obtained by adding a box to $\lambda$ corresponding to the rightmost $+$ in the reduced word of the Misra Miwa ordering (resp. ladder ordering). To see these rules in more detail, with examples, see \cite{B}.

Through the rest of this paper, 

\[\varepsilon = \varepsilon_i (\lambda) = max \{n : \widetilde{e_i}^n \lambda \neq 0 \},\] \[\varphi = \varphi_i (\lambda) = max \{n : \widetilde{f_i}^n \lambda \neq 0 \},\] \[\widehat{\varepsilon} = \widehat{\varepsilon}_i (\lambda) = max \{n : \widehat{e_i}^n \lambda \neq 0\},\] \[\widehat{\varphi} = \widehat{\varphi}_i (\lambda)= max \{n : \widehat{f_i}^n \lambda \neq 0\}.\]

\subsection{Previous results on the crystal $reg_\ell$}
In \cite{BV} we proved the following theorem about $\ell$-partitions in the crystal $reg_\ell$. 
\begin{theorem}\label{top_and_bottom}
Suppose that $\lambda$ is an $\ell$-partition and $0\leq i < \ell$. Then
\begin{enumerate}
\item\label{f} $\widetilde{f}_{i}^{\varphi} \lambda$ is an $\ell$-partition,
\item\label{e} $\widetilde{e}_{i}^{\varepsilon} \lambda$ is an $\ell$-partition.

\item\label{f_theorem} $\widetilde{f}_{i}^{k} \lambda$ is not an $\ell$-partition for $0<k < \varphi -1,$
\item\label{e_theorem} $\widetilde{e}_{i}^k \lambda$ is not an $\ell$-partition for $1<k< \varepsilon$.
\end{enumerate}
\end{theorem}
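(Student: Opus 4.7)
The plan is to exploit the decomposition of $\ell$-partitions from \cite{BV}, the specialization of Theorem \ref{construct_JMs} in which the vertical data is trivial: every $\ell$-partition admits a unique presentation $\lambda \approx (\mu, r, \rho)$ with $\mu$ an $\ell$-core satisfying $\mu_1 - \mu_2 < \ell-1$, with $r \ge 0$ counting the staircase rows of lengths $\mu_1 + (\ell-1), \mu_1 + 2(\ell-1), \ldots$ stacked above $\mu$, and with $\rho$ a partition (of length at most $r+1$) listing the number of horizontal $\ell$-rim hooks attached to each of the top $r+1$ rows. The $\ell$-regularity of $\lambda$ is automatic because this construction never produces $\ell$ equal parts. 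First I would fix a residue $i$ and locate every removable and addable $i$-box in $\lambda$ by inspecting the three regions (core, staircase, horizontal hooks) separately; this both determines the reduced $i$-signature (and hence $\varphi_i$ and $\varepsilon_i$) and identifies precisely which boxes the operators $\widetilde{f}_i^k$ and $\widetilde{e}_i^k$ act on.

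For parts (\ref{f}) and (\ref{e}) I would verify that $\widetilde{f}_i^{\varphi}\lambda$ and $\widetilde{e}_i^{\varepsilon}\lambda$ again admit a decomposition of the shape $(\mu', r', \rho')$ and so are $\ell$-partitions by Theorem \ref{construct_JMs}. In the generic case the only effect is to raise some $\rho_j$ by $1$, i.e.\ to slide a fresh horizontal $\ell$-rim hook onto row $j$, which preserves the decomposition trivially. In the boundary case the crystal operator crosses the interface between the staircase and the core, and I would show that the net effect is a single bead move on the $\ell$-abacus of $\mu$ together with a compensating change of $r$; the resulting core $\mu'$ still satisfies $\mu'_1 - \mu'_2 < \ell-1$ and the decomposition survives. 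In either case no non-horizontal removable $\ell$-rim hook is ever introduced, because all added boxes either complete an existing horizontal row or sit at a staircase edge where only horizontal hooks can appear.

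For parts (\ref{f_theorem}) and (\ref{e_theorem}) I would argue contrapositively. If $\widetilde{f}_i^k \lambda$ (for intermediate $k$) were an $\ell$-partition, it would itself admit a decomposition $(\widetilde{\mu}, \widetilde{r}, \widetilde{\rho})$. Comparing this to what an explicit $k$-fold addition actually produces via the $i$-signature analysis shows a mismatch: the partially filled string of $+$'s creates either two adjacent rows whose successive difference is neither $0 \bmod \ell$ nor equal to $\ell-1$, or a row configuration that forces a removable $\ell$-rim hook which is neither horizontal nor vertical. Theorem \ref{main_theorem_JM} then detects the obstruction and contradicts the $(\ell,0)$-JM condition (hence the $\ell$-partition condition). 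The same analysis applied to $\widetilde{e}_i^k \lambda$ handles part (\ref{e_theorem}).

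The main obstacle is the case where $\varphi_i(\lambda)$ counts $+$'s spanning both the staircase/hook region and the core. There the decomposition of the target must be identified through a nontrivial abacus manipulation, and — more delicately — one has to explain the asymmetry $0 < k < \varphi - 1$ (rather than $0 < k < \varphi$) in (\ref{f_theorem}): the penultimate and final applications of $\widetilde{f}_i$ both land on boxes at the staircase/core interface, and the failure of the $\ell$-partition property caused by the penultimate addition is precisely what the final addition repairs. Making this cancellation explicit, by exhibiting the adjacent horizontal and vertical rim hooks that appear after $\varphi - 1$ steps and showing they fuse into a single horizontal row after one more $\widetilde{f}_i$, is the technical heart of the argument; the analogous offset in (\ref{e_theorem}) is handled in parallel.
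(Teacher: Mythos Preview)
This theorem is not proved in the present paper: it is quoted verbatim as a result of \cite{BV} (``In \cite{BV} we proved the following theorem\ldots''), and the paper then uses it as a black box in the proof of Theorem~\ref{top_and_bottom_JM}. There is therefore no proof here to compare your proposal against.

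That said, your strategy---reading off the $i$-signature from the decomposition $(\mu,r,\rho)$ and tracking how $\widetilde f_i^{\varphi}$ and $\widetilde e_i^{\varepsilon}$ update the triple---is precisely the mechanism the paper attributes to \cite{BV}, and indeed the proof of Theorem~\ref{top_and_bottom_JM} in this paper runs the analogous argument for $(\ell,0)$-JM partitions by reducing to Theorem~\ref{top_and_bottom} on the $\ell$-regular slice $(\mu,r,0,\rho,\emptyset)$. One caution: when you invoke Theorem~\ref{main_theorem_JM} for parts~(\ref{f_theorem}) and~(\ref{e_theorem}), be sure you are really appealing to the $\ell$-regular version, i.e.\ Theorem~\ref{main_theorem_l_partitions} from \cite{BV}, since Theorem~\ref{main_theorem_JM} in this paper is the later generalization and citing it here would be anachronistic (though not circular). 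Your description of the ``penultimate versus final'' $\widetilde f_i$ at the staircase/core interface is the right picture for why $k=\varphi-1$ may still yield an $\ell$-partition, but note that the paper handles the analogous step for $(\ell,0)$-JM partitions more cheaply via Lemma~\ref{JMAAR} (no two addables and a removable of the same residue), which short-circuits the explicit rim-hook bookkeeping you sketch.
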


In this paper, we generalize the above Theorem \ref{top_and_bottom} to weak $\ell$-partitions. We first give the statement of our new theorem.

\begin{theorem}\label{top_and_bottom_weak}
Suppose that $\lambda$ is a weak $\ell$-partition and $0\leq i < \ell$. Then
\begin{enumerate}
\item $\widetilde{f}_{i}^{\varphi} \lambda$ is a weak $\ell$-partition,
\item $\widetilde{e}_{i}^{\varepsilon} \lambda$ is a weak $\ell$-partition.
\item $\widetilde{f}_{i}^{k} \lambda$ is not a weak $\ell$-partition for $0<k < \varphi -1,$
\item $\widetilde{e}_{i}^k \lambda$ is not a weak $\ell$-partition for $1<k< \varepsilon$.
\end{enumerate}
\end{theorem}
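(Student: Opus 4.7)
The plan is to reduce Theorem \ref{top_and_bottom_weak} to an analogous statement on the crystal $ladd_\ell$ involving $(\ell,0)$-JM partitions, and then transport it to $reg_\ell$ via the crystal isomorphism $\mathcal{R}$. Concretely, I would first establish: \emph{If $\lambda$ is an $(\ell,0)$-JM partition and $0\le i<\ell$, then $\widehat{f_i}^{\,\widehat{\varphi}}\lambda$ and $\widehat{e_i}^{\,\widehat{\varepsilon}}\lambda$ are $(\ell,0)$-JM, while $\widehat{f_i}^k\lambda$ fails to be $(\ell,0)$-JM for $0<k<\widehat{\varphi}-1$ and $\widehat{e_i}^k\lambda$ fails to be $(\ell,0)$-JM for $1<k<\widehat{\varepsilon}$.} Granting this, the theorem follows because, by Theorem \ref{deco}, the weak $\ell$-partitions are exactly the images of $(\ell,0)$-JM partitions under $\mathcal{R}$, and because $\mathcal{R}$ intertwines $\widehat{f_i},\widehat{e_i}$ with $\widetilde{f_i},\widetilde{e_i}$ and identifies $\widehat{\varphi}$ with $\varphi$ and $\widehat{\varepsilon}$ with $\varepsilon$; so applying $\mathcal{R}$ to the $ladd_\ell$ statement yields Theorem \ref{top_and_bottom_weak}.

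The $ladd_\ell$ statement is established using the decomposition of Theorem \ref{construct_JMs}: write $\lambda \approx (\mu,r,s,\rho,\sigma)$. The first step is to enumerate the addable and removable boxes of $\lambda$ of residue $i$ in terms of this data. Because the staircase rows (of length $\mu_1+k(\ell-1)$) and the corresponding columns have rigidly determined residues, and because Lemma \ref{JMAAR} forbids a simultaneous removable $i$-box together with two addable $i$-boxes, the ladder-crystal $i$-signature of $\lambda$ has only a small number of shapes. For each shape I would compute explicitly, in ladder order, the positions acted on by $\widehat{f_i}^k$ for every $k=1,\dots,\widehat{\varphi}$, and symmetrically for $\widehat{e_i}$.

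For the extremal claims, I would exhibit a new quintuple $(\mu',r',s',\rho',\sigma')$ decomposing $\widehat{f_i}^{\,\widehat{\varphi}}\lambda$ (and $\widehat{e_i}^{\,\widehat{\varepsilon}}\lambda$): generically this amounts to incrementing one of $r$, $s$, $\rho_j$, $\sigma_j$, while in degenerate cases one of the staircase counts rolls over and changes the core $\mu$. Theorem \ref{construct_JMs} then certifies that these extremal iterates are $(\ell,0)$-JM. For the intermediate claims, I would show that a strictly partial iterate $\widehat{f_i}^k\lambda$ must contain either a non-horizontal, non-vertical removable $\ell$-rim hook or an adjacent horizontal/vertical pair, violating Definition \ref{generalize_l_partition} and hence, by Theorem \ref{main_theorem_JM}, failing to be $(\ell,0)$-JM; the reason is that a partial fill covers only some of a contiguous block of addable $i$-boxes in the ladder order, so its boundary produces such a forbidden rim-hook configuration.

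The main obstacle is the case analysis: the quintuple $(\mu,r,s,\rho,\sigma)$ has many pieces, and the effect of $\widehat{f_i}^k$ depends on where the residue-$i$ signature lives (top staircase, left staircase, horizontal strip on row $\le r+1$, vertical strip on column $\le s+1$, or the core $\mu$ itself), and on how those pieces interact. Handling all combinations uniformly, while respecting the ladder reading order and tracking whether a partial fill produces an adjacent horizontal/vertical pair, is where the real work lies; Lemma \ref{JMAAR} is what keeps the analysis finite by eliminating configurations in which multiple addable and removable $i$-boxes coexist.
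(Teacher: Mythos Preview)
Your high-level strategy---prove the analogous statement on $ladd_\ell$ for $(\ell,0)$-JM partitions (this is Theorem~\ref{top_and_bottom_JM}) and then transport via the crystal isomorphism $\mathcal{R}$---is exactly the paper's. But there is a genuine gap in the transport step for parts (3) and (4). Knowing that $\widehat{f}_i^k\nu$ is not $(\ell,0)$-JM, where $\nu$ is your chosen $(\ell,0)$-JM partition with $\mathcal{R}\nu=\lambda$, does not yet show that $\widetilde{f}_i^k\lambda$ fails to be a weak $\ell$-partition: you must also rule out the existence of \emph{some other} $(\ell,0)$-JM partition $\nu'$ with $\mathcal{R}\nu'=\widetilde{f}_i^k\lambda$. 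The paper closes this via Theorem~\ref{irreducible_nodes}, which says every $(\ell,0)$-JM partition is a node of $ladd_\ell$; since $\mathcal{R}$ restricted to $ladd_\ell$ is a bijection onto $reg_\ell$, the unique $ladd_\ell$-node in the regularization class of $\widetilde{f}_i^k\lambda$ is $\widehat{f}_i^k\nu$, so no such $\nu'$ can exist. You never isolate or invoke this ingredient, and without it the negative claims do not transfer. (The same theorem is also what justifies your assertion that $\mathcal{R}$ intertwines $\widehat{f}_i$ with $\widetilde{f}_i$ on the relevant partitions in the first place.)

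On the $ladd_\ell$ side your plan is workable but heavier than the paper's. For (3) and (4) the paper does not hunt for a forbidden rim-hook configuration; it applies Lemma~\ref{JMAAR} directly: after $k$ steps with $0<k<\widehat{\varphi}-1$, the last box added is a removable $i$-box, and by Lemma~\ref{cancelation} at least two addable $i$-boxes remain, so $\widehat{f}_i^k\lambda$ violates Lemma~\ref{JMAAR} and is not $(\ell,0)$-JM. For (1) and (2) the paper also short-circuits much of the case analysis: when the residue $i$ matches the addable box in the first row (but not the first column), it passes to the auxiliary $\ell$-partition $(\mu,r,0,\rho,\emptyset)$ and invokes the already-proven Theorem~\ref{top_and_bottom}, rather than building the new quintuple $(\mu',r',s',\rho',\sigma')$ from scratch; the first-column case is handled by transposition, and the remaining cases reduce to the action of $\widehat{f}$ on the core $\mu$ alone.
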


\subsection{Crystal theoretic results for $ladd_\ell$ and $(\ell,0)$-JM partitions}
For a proof of these new theorems, we will start by proving analogous statements in the ladder crystal $ladd_\ell$.
To do this, we will first need some lemmas. 

\begin{lemma}\label{cores}
 All $\ell$-cores are nodes of $ladd_\ell$. In particular, If $\lambda$ is an $\ell$-core, then $\widehat{\varphi} = \varphi$ and $\widehat{f}_i^{\hat{\varphi}} \lambda = \tilde{f}_i^{\varphi} \lambda$.
\end{lemma}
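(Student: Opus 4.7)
The plan is to first establish a \textbf{purity} property for $\ell$-cores: for any $\ell$-core $\lambda$ and any residue $i$, either $\varepsilon_i(\lambda)=0$ or $\varphi_i(\lambda)=0$. In other words, $\lambda$ cannot simultaneously have an $i$-addable box and an $i$-removable box. I would prove this with a direct hook-length argument. Suppose $(r,\lambda_r)$ is $i$-removable and $(r',\lambda_{r'}+1)$ is $i$-addable with $r<r'$. Addability of the second box forces $\lambda_{r'-1}\geq\lambda_{r'}+1$, and since $r\leq r'-1$ we have $\lambda_r\geq\lambda_{r'}+1$, so the box $(r,\lambda_{r'}+1)$ lies in $\lambda$. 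A short computation shows its hook length equals $\lambda_r-\lambda_{r'}+r'-r-1$, which is strictly positive. The residue conditions give $\lambda_r-r\equiv i$ and $\lambda_{r'}-r'\equiv i-1\pmod{\ell}$, so this hook length is divisible by $\ell$, contradicting the $\ell$-core property. The case $r>r'$ follows by applying the same argument to $\lambda'$.

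With purity in hand, the $i$-signature word has no cancellations in either reading order, so $\widehat{\varepsilon}_i(\lambda)=\varepsilon_i(\lambda)$ and $\widehat{\varphi}_i(\lambda)=\varphi_i(\lambda)$ each equal the number of $i$-removable (resp.\ $i$-addable) boxes of $\lambda$. Applying $\widetilde{f}_i^{\varphi}$ fills every $i$-addable box of $\lambda$, and applying $\widehat{f}_i^{\widehat{\varphi}}$ does the same (only the order of filling differs); since both processes terminate in the unique partition obtained from $\lambda$ by adding all addable $i$-boxes, they coincide. This handles the ``in particular'' clause of the lemma.

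For the first assertion, that every $\ell$-core is a node of $ladd_\ell$, I would induct on $|\lambda|$. The base case $\lambda=\emptyset$ is immediate since $\emptyset$ is the highest-weight node of $ladd_\ell$. For the inductive step, choose a residue $i$ with $\varepsilon_i(\lambda)>0$; such an $i$ exists because $\lambda$ is nonempty. Set $\mu:=\widetilde{e}_i^{\varepsilon}\lambda$, obtained from $\lambda$ by removing every $i$-removable box. I would verify that $\mu$ is again an $\ell$-core, either via the abacus (removing all $i$-removable boxes transfers a block of beads from runner $i$ to runner $i-1$ while keeping every runner top-loaded) or via the decomposition of Theorem \ref{construct_JMs} (an $\ell$-core corresponds to data $(\mu_0,r,s,\emptyset,\emptyset)$, and the operator just adjusts $r,s$). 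Since $|\mu|<|\lambda|$, the induction hypothesis gives $\mu\in ladd_\ell$, and then the preceding paragraph applied to $\mu$ yields $\widehat{f}_i^{\widehat{\varphi}(\mu)}\mu=\widetilde{f}_i^{\varphi(\mu)}\mu=\lambda$, placing $\lambda$ in $ladd_\ell$.

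The main obstacle is the purity claim: everything downstream is formal once purity is established. A secondary technical point is the stability of $\ell$-cores under $\widetilde{e}_i^{\varepsilon}$, which is most transparent on the abacus but can also be extracted combinatorially from the classification of $\ell$-cores given earlier in the paper.
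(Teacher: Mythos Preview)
Your argument is correct, but it takes a genuinely different route from the paper's.

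The paper's proof is a two-line structural argument: an $\ell$-core is the unique partition in its regularization class, and since $\mathcal{R}\colon ladd_\ell \to reg_\ell$ is a crystal isomorphism, the preimage of an $\ell$-core $\lambda$ in $ladd_\ell$ must be $\lambda$ itself. The equality $\widehat{\varphi}=\varphi$ and $\widehat{f}_i^{\widehat{\varphi}}\lambda=\widetilde{f}_i^{\varphi}\lambda$ then drops out of the isomorphism (together with the standard fact that $\widetilde{f}_i^{\varphi}$ of a core is again a core, hence again unique in its class).

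Your approach replaces the regularization-class uniqueness by a direct \emph{purity} argument: an $\ell$-core cannot have both an addable and a removable $i$-box. This is essentially a special case of the paper's Lemma~\ref{hook_length_divisible}, and your hook-length computation reproves that case cleanly. From purity you get no $-+$ cancellation in either reading order, so $\widehat{\varphi}=\varphi$ and both $\widehat{f}_i^{\widehat{\varphi}}$ and $\widetilde{f}_i^{\varphi}$ simply add every addable $i$-box; the first assertion then follows by induction along $i$-strings. One small point you leave implicit: in the inductive step you need $\widetilde{f}_i^{\varphi_i(\mu)}\mu=\lambda$, which requires $\varphi_i(\mu)=\varepsilon_i(\lambda)$; this holds precisely because purity forces $\varphi_i(\lambda)=0$, so $\lambda$ sits at the top of its $i$-string.

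The trade-off: the paper's proof is shorter but imports the nontrivial fact that $\ell$-cores are alone in their regularization classes, whereas your proof is self-contained at the level of hook lengths and crystal combinatorics, and makes the purity property explicit (which is independently useful and reappears in spirit in Lemmas~\ref{hook_length_divisible} and~\ref{cancelation}).
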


\begin{proof}
$\ell$-cores are unique in their regularization class, so since $\mathcal{R}$ is an isomorphism from $ladd_\ell$ to $reg_\ell$, all $\ell$-cores are nodes of $ladd_\ell$. The second statement is just a consequence of the crystals being isomorphic.
\end{proof}

The following lemma is a well known recharacterization of $\ell$-cores.
\begin{lemma}\label{hook_length_divisible}
A box $x$ has a hook length divisible by $\ell$ if and only if there exists a residue $i$ 
so that the last box in the row of $x$ has residue $i$ and the last box in the column of $x$ 
has residue $i+1$. In particular, any partition which has such a box $x$ is not an $\ell$-core.
\end{lemma}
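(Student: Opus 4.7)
The plan is a direct residue computation that collapses both sides of the iff into a single congruence. Fix $x = (a,b)$ in $\lambda$ and set $r = \lambda_a$ and $c = \lambda'_b$, so that $(a,r)$ is the last box in the row of $x$ and $(c,b)$ is the last box in the column of $x$. By the definition recalled in the background, the hook length is
\[
h_{(a,b)}^\lambda = (r-b) + (c-a) + 1,
\]
while the residues of $(a,r)$ and $(c,b)$ are $r-a$ and $b-c$ modulo $\ell$, respectively.

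The key step is a one-line manipulation: the condition that the row-end residue equals $i$ and the column-end residue equals $i+1$ forces $i \equiv r-a \pmod{\ell}$, and then the second condition becomes $b - c \equiv (r-a) + 1 \pmod{\ell}$. Rearranging, this is equivalent to
\[
(r-b) + (c-a) + 1 \equiv 0 \pmod{\ell},
\]
i.e., $\ell \mid h_{(a,b)}^\lambda$. So for the forward direction one takes $i$ to be the residue of the last box in the row and reads off the congruence; for the reverse direction the same congruence exhibits the required $i$. Both directions therefore follow from this identity.

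The "in particular" clause is immediate from the equivalent characterization of $\ell$-cores already recorded earlier in the background, namely that $\lambda$ is an $\ell$-core if and only if $\ell \nmid h_{(i,j)}^\lambda$ for every box $(i,j) \in \lambda$: a partition that contains a box $x$ satisfying the residue condition has $\ell \mid h_x^\lambda$ by the equivalence just proved, hence is not an $\ell$-core. There is no real obstacle; the lemma is a bookkeeping check, and the only care needed is to not confuse row versus column indices when writing down the two boundary residues.
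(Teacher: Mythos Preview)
Your argument is correct: the hook length formula $h_{(a,b)}^\lambda = (\lambda_a - b) + (\lambda'_b - a) + 1$ together with the residue definitions reduces the iff to a single congruence, exactly as you wrote, and the ``in particular'' clause follows from the stated hook-length characterization of $\ell$-cores. The paper does not actually supply a proof of this lemma --- it is introduced as ``a well known recharacterization of $\ell$-cores'' and left unproved --- so there is nothing to compare against; your direct computation is the natural way to fill the gap.
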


\begin{lemma}\label{nodes_JM}
If $\lambda$ is an $(\ell,0)$-JM partition then there is 
no ladder in the Young diagram of $\lambda$ which has a $-$ above a $+$.
\end{lemma}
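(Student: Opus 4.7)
The plan is to argue by contradiction: assume $\lambda$ is $(\ell,0)$-JM and that some ladder has a $-$ above a $+$, coming from a removable corner at $(a,b)$ and an addable corner at $(c,d)$ with $a<c$. The ladder condition then yields $c-a=(\ell-1)k$ and $b-d=k$ for some integer $k\ge 1$.

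The first step is to locate a distinguished box inside $\lambda$ whose hook length is divisible by $\ell$. Because $(a,b)$ is a removable corner, $\lambda_a=b$; because $(c,d)$ is an addable corner with $(c-1,d)\in\lambda$ but $(c,d)\notin\lambda$, necessarily $\lambda'_d=c-1$. The box $(a,d)$ therefore lies in $\lambda$ with
\[
h_{(a,d)}^{\lambda}=(b-d)+(c-1-a)+1=k+(\ell-1)k=\ell k.
\]
Alternatively, this divisibility follows from Lemma \ref{hook_length_divisible}, since the row-end $(a,b)$ and column-end $(c-1,d)$ have residues differing by exactly $1$ mod $\ell$.

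The second step is to run the JM criterion (Theorem \ref{JM_irred}) through the entire column $d$. The box $(a,b)$ lies in the same row as $(a,d)$ and has hook length $1$, hence not divisible by $\ell$; since $\lambda$ is $(\ell,0)$-JM and $\ell\mid h_{(a,d)}^{\lambda}$, the criterion forces $\ell\mid h_{(x,d)}^{\lambda}$ for every $x$ with $(x,d)\in\lambda$. Using $h_{(x,d)}^{\lambda}-h_{(x+1,d)}^{\lambda}=\lambda_x-\lambda_{x+1}+1$, one gets $\lambda_x-\lambda_{x+1}\equiv -1\pmod{\ell}$ for $a\le x\le c-2$, and since the left side is a nonnegative integer not congruent to $0$ mod $\ell$, each such difference must be at least $\ell-1$.

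Summing the $c-1-a=(\ell-1)k-1$ inequalities yields $\lambda_a-\lambda_{c-1}\ge (\ell-1)^2 k-(\ell-1)$, while $\lambda_{c-1}\ge d$ forces $\lambda_a-\lambda_{c-1}\le b-d=k$; together these give $k\,\ell(\ell-2)\le \ell-1$, which is impossible for $\ell\ge 3$ and $k\ge 1$ (already $\ell(\ell-2)>\ell-1$ in that range). The delicate point is the first step---correctly translating the graphical ``$-$ above $+$'' condition into the arithmetic relation $c-a=(\ell-1)(b-d)$ with $b>d$ and pinning down $\lambda'_d=c-1$ exactly---after which the JM condition cascades cleanly down column $d$ to produce the contradiction.
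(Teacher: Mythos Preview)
Your proof is correct and follows essentially the same approach as the paper: both set up the removable box $(a,b)$ above the addable box $(c,d)$ on a common ladder, compute $h_{(a,d)}^{\lambda}=\ell k$, note $h_{(a,b)}^{\lambda}=1$, and then argue that the $(\ell,0)$-JM condition would force every hook length in column $d$ to be divisible by $\ell$, which is impossible. The only difference is in how that final impossibility is extracted: the paper uses a direct pigeonhole count (the hook lengths below $(a,d)$ are distinct integers in $[1,\ell k-1]$, so at most $k-1$ of them can be multiples of $\ell$, fewer than the $(\ell-1)k-1$ boxes present), whereas you translate the divisibility into $\lambda_x-\lambda_{x+1}\ge \ell-1$ and sum to get a contradictory bound on $\lambda_a-\lambda_{c-1}$; these are equivalent counting arguments.
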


\begin{proof}

Let the coordinates of the $-$ be $(a,b)$, and let the coordinates of the $+$ be $(c,d)$. If $b-d = m$, then the box $(a,d)$ has $h^\lambda_{(a,d)} = m \ell$. Also $h^\lambda_{(a,b)} = 1$. If we can find a box $x$ in column $d$ such that $\ell \nmid h^\lambda_{x}$, then $\lambda$ is not an $(\ell,0)$-JM partition, contradicting the hypothesis. Suppose all the boxes below $(a,d)$ had hook lengths which were multiples of $\ell$. Since $h^\lambda_{(a,d)} = m \ell$, there are $m \ell - m$ total boxes  below $(a,d)$ in $\lambda$. Since the hook lengths of each box decreases down any column, at most $m-1$ of these boxes can have hook lengths divisible by $\ell$ (corresponding to hook lengths $(m-1) \ell, (m-2)\ell, \dots, \ell$). Therefore, the remaining $m\ell - m - (m -1) = m (\ell-2)-1$ must all have hook lengths not divisible by $\ell$. Since $\ell > 2$, $m (\ell-2) - 1 > m-1 \geq 0$, so some box in column $d$ must not be divisible by $\ell$.

\end{proof}

The following lemma will be used in this section for proving our crystal 
theorem generalizations for $(\ell,0)$-JM partitions.

\begin{lemma}\label{cancelation} Let $\lambda$ be an $(\ell,0)$-JM partition. Then the ladder 
$i$-signature of $\lambda$ is the same as the reduced ladder $i$-signature of $\lambda$. In other words, there is no 
$-+$ cancelation in the ladder $i$-signature of $\lambda$. 
\end{lemma}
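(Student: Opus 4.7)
The plan is to prove the contrapositive: any $-+$ cancelation in the ladder $i$-signature of $\lambda$ produces a triple of boxes in $\lambda$ witnessing the failure of the hook-length condition in Theorem \ref{JM_irred}, so $\lambda$ cannot be an $(\ell,0)$-JM partition. A cancelation means there is a removable $i$-box $(a,b)$ (the $-$) and an addable $i$-box $(c,d)$ (the $+$) with $(a,b)$ preceding $(c,d)$ in the reading order. The two boxes share residue $i$, so $b-a\equiv d-c\pmod{\ell}$, and the precedence condition amounts to the ladder-invariant inequality $a+(\ell-1)b\le c+(\ell-1)d$. When equality holds, the two boxes lie on the same ladder with $(a,b)$ above $(c,d)$, and this is directly ruled out by Lemma \ref{nodes_JM}. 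So I reduce to the strict inequality $c-a>(\ell-1)(b-d)$, and at the outset dispose of the degenerate possibilities $b=d$ and $a=c$: in each, removability of $(a,b)$ together with addability of $(c,d)$ and the residue congruence force a contradiction (using $\ell\ge 3$).

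In the sub-case $b>d$, the strict inequality combined with $b-d>0$ forces $c>a$, so $(a,d)\in\lambda$ with arm $b-d$ and leg $c-1-a$ (the latter because the addability of $(c,d)$ pins $\lambda'_d=c-1$). Thus $h^\lambda_{(a,d)}=(b-d)+(c-a)$, which is divisible by $\ell$ by the residue congruence. Down column $d$ the hook lengths strictly decrease, so at most $\frac{(b-d)+(c-a)}{\ell}-1$ of the $c-1-a$ boxes strictly below $(a,d)$ can have hook divisible by $\ell$. A short arithmetic check using $c-a>(\ell-1)(b-d)$ and $\ell\ge 3$ shows the number of available boxes strictly exceeds this upper bound, so some $(x,d)$ with $a<x<c$ has $\ell\nmid h^\lambda_{(x,d)}$. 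Together with $h^\lambda_{(a,b)}=1$, the triple $(a,d),(a,b),(x,d)$ then violates Theorem \ref{JM_irred}. The sub-case $b<d$ is the mirror image: I first rule out $c\ge a$ (either would force $\lambda_c\ge b$, contradicting $\lambda'_b=a$ from the removability of $(a,b)$), so $c<a$, and then $(c,b)\in\lambda$ has hook $(d-1-b)+(a-c)$, again divisible by $\ell$ by the residue congruence. The symmetric descending-hook count along row $c$ between columns $b$ and $d-1$ produces $(c,y)$ with $b<y<d$ and $\ell\nmid h^\lambda_{(c,y)}$, and the triple $(c,b),(c,y),(a,b)$ is then a violation of Theorem \ref{JM_irred}.

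The main obstacle is the arithmetic bookkeeping in these two symmetric sub-cases: each reduces to a linear inequality of the form ``number of intermediate boxes strictly exceeds number of available hook-multiples of $\ell$'', which the strict ladder inequality and the assumption $\ell\ge 3$ only barely guarantee. A secondary point of care is the boundary behavior when $(a,b)$ is at the very bottom of its column in $\lambda$ or when $(c,d)$ introduces a brand-new row or column; in each such case the relevant extremal column length $\lambda'_d$ (or row length $\lambda_c$) is fixed in the natural way by addability or removability, and the central hook calculation is unchanged.
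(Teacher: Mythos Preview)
Your proof is correct and follows essentially the same strategy as the paper's: argue by contrapositive, invoke Lemma~\ref{nodes_JM} to dispose of the same-ladder case, and in the remaining case locate the corner box $(a,d)$ (respectively $(c,b)$) whose hook is divisible by $\ell$, then count strictly decreasing hooks down its column (respectively along its row) to produce a box with hook not divisible by $\ell$, completing a forbidden triple together with the removable box of hook $1$. Your case split on $b\gtrless d$ is equivalent to the paper's split on $a\lessgtr c$ (the removability/addability constraints force these to coincide), and your arithmetic verification of the key inequality $(\ell-1)(c-a)>b-d$ (and its mirror) is more explicit than the paper's.
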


\begin{proof}
Suppose there is a $-+$ cancelation in the ladder $i$-signature of an $(\ell,0)$-JM partition $\lambda$.
 By Lemma \ref{nodes_JM}, it must be that a removable $i$-box occurs on a ladder to the left of a ladder
 which contains an addable $i$-box. Suppose the removable $i$-box is in position $(a,b)$ and the addable 
$i$-box is in position $(c,d)$. We will suppose that $a > c$ (the case $a<c$ 
is similar). Then $\ell \mid h_{(c,b)}^{\lambda}$. Also $h_{(a,b)}^{\lambda} = 1$ since $(a,b)$ is a 
removable box. Let us suppose that $\ell \mid h_{(c,k)}^\lambda$ for all boxes $(c,k)$ in $\lambda$ to the right of $(c,b)$. The fact that $(a,b)$ is in a ladder to the left of $(c,d)$ is equivalent to the fact that $\frac{d-b}{a-c} >1$, or $d-b > a-c$. By definition $h_{(c,b)}^\lambda =d-b+a-c+1$. The number of positions $(c,k)$ in $\lambda$ for $k \geq b$ can be at most $\frac{h_{(c,b)}}{\ell} = \frac{d-b+a-c+1}{\ell} < \frac{2(d-b) +1}{\ell} < d-b$ since $\ell >2$. In order for $(c,d)$ to be an addable position, we need to have exactly $d-b$ boxes $(c,k)$ for $k \geq b$ in $\lambda$. This contradicition implies that $\lambda$ is not an $(\ell,0)$-JM partition.
\end{proof}

\subsection{Generalizations of the crystal theorems to $ladd_\ell$}
We will now prove an analogue of Theorem \ref{top_and_bottom} for $(\ell,0)$-JM partitions in the ladder crystal $ladd_\ell$.
\begin{theorem}\label{top_and_bottom_JM}
Suppose that $\lambda$ is an $(\ell,0)$-JM partition and $0\leq i < \ell$. Then
\begin{enumerate}
\item\label{f_JM} $\widehat{f}_{i}^{\widehat{\varphi}} \lambda$ is an $(\ell,0)$-JM partition,
\item\label{e_JM} $\widehat{e}_{i}^{\widehat{\varepsilon}} \lambda$ is an $(\ell,0)$-JM partition.
\item\label{f_theorem_JM} $\widehat{f}_{i}^{k} \lambda$ is not an $(\ell,0)$-JM partition for $0<k < \widehat{\varphi} -1,$
\item\label{e_theorem_JM} $\widehat{e}_{i}^k \lambda$ is not an $(\ell,0)$-JM partition for $1<k< \widehat{\varepsilon}$.
\end{enumerate}
\end{theorem}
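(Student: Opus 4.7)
I would prove parts (3) and (4) by a direct counting argument, and parts (1) and (2) by exhibiting an explicit decomposition via Theorem \ref{construct_JMs}.

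For parts (3) and (4), first observe, via Lemma \ref{cancelation}, that $\widehat{\varphi}$ equals the number of addable $i$-boxes of $\lambda$ and $\widehat{\varepsilon}$ equals the number of removable $i$-boxes. A short check shows that one application of $\widehat{f}_i$ changes these counts by exactly $-1$ and $+1$, respectively: after filling in a box at some position $(c,d)$ with $d - c \equiv i \pmod \ell$, the only newly addable positions are $(c+1,d)$ and $(c,d+1)$, of residues $i-1$ and $i+1$, so no new addable $i$-box appears; the filled box itself is a new removable $i$-box; and a brief residue check (using $\ell > 2$) shows no other addable or removable $i$-box is altered. Iterating, $\widehat{f}_i^k \lambda$ has $\widehat{\varphi} - k$ addable and $\widehat{\varepsilon} + k$ removable $i$-boxes, so whenever $0 < k < \widehat{\varphi} - 1$ it has at least two addable and at least one removable $i$-box, and Lemma \ref{JMAAR} prevents it from being $(\ell,0)$-JM. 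A symmetric argument with $\widehat{e}_i$ gives part (4): $\widehat{e}_i^k \lambda$ has $\widehat{\varphi} + k$ addable and $\widehat{\varepsilon} - k$ removable $i$-boxes, so for $1 < k < \widehat{\varepsilon}$ Lemma \ref{JMAAR} again applies.

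For parts (1) and (2), the plan is to write $\lambda \approx (\mu, r, s, \rho, \sigma)$ by Theorem \ref{construct_JMs} and to exhibit a corresponding decomposition for $\lambda^+ = \widehat{f}_i^{\widehat{\varphi}} \lambda$ (and, analogously, for $\widehat{e}_i^{\widehat{\varepsilon}} \lambda$). The reduced-signature structure, namely that the ladder $i$-signature of $\lambda$ has form $+^{\widehat{\varphi}} -^{\widehat{\varepsilon}}$ by Lemma \ref{cancelation}, together with the within-ladder restriction from Lemma \ref{nodes_JM}, confines the addable $i$-boxes of $\lambda$ to a small collection of predictable positions in the decomposition: the end of one of the top $r+1$ staircase rows, the bottom of one of the left $s+1$ staircase columns, or an addable corner along the boundary of the core $\mu$. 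Going case-by-case through these possibilities, I will check that filling in every addable $i$-box simultaneously yields a partition $\lambda^+$ that still decomposes as $(\mu^+, r^+, s^+, \rho^+, \sigma^+)$ in the sense of Theorem \ref{construct_JMs}. That theorem then immediately gives that $\lambda^+$ is an $(\ell,0)$-JM partition, which proves (1); part (2) is handled by the mirror-image analysis on the removable $i$-boxes, where the removals shrink the decomposition in the analogous way.

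The main obstacle is the case analysis in (1) and (2). Depending on where the addable $i$-boxes sit, the update to $(\mu, r, s, \rho, \sigma)$ may simply increment one or more entries of $\rho$ or $\sigma$ by $1$, extend the staircase regions (adjusting $r$ or $s$), or modify the $\ell$-core $\mu$ itself; in each scenario one must verify that $\mu^+$ is still an $\ell$-core satisfying the first-row and first-column constraints required by Theorem \ref{construct_JMs}. The restrictions imposed by Lemmas \ref{cancelation} and \ref{nodes_JM} drastically limit the configurations that can actually arise, but a careful uniform enumeration is still needed to cover all cases.
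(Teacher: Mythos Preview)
Your proposal is correct and follows essentially the same strategy as the paper: Lemma~\ref{JMAAR} for parts (3) and (4), and the decomposition $\lambda \approx (\mu,r,s,\rho,\sigma)$ from Theorem~\ref{construct_JMs} plus case analysis for parts (1) and (2). Your counting argument for (3)--(4) is a careful expansion of what the paper compresses into ``follows from Lemma~\ref{JMAAR}.''

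The one notable difference is in how the case analysis for (1) is organized. The paper splits according to whether the residue $i$ equals the residue $m$ of the addable box in the first row, the residue $n$ of the addable box in the first column, both, or neither. In the mixed case $m = i \neq n$, the paper avoids a direct verification by observing that $\nu \approx (\mu, r, 0, \rho, \emptyset)$ is an $\ell$-partition (it is $\ell$-regular with only horizontal rim hooks), so Theorem~\ref{top_and_bottom} from \cite{BV} already handles $\widehat{f}_{i-s}^{\widehat\varphi}\nu$, and one simply reattaches the $s$ columns and $\sigma$ afterward. The case $m \neq i = n$ is handled by transposing. This reduction to the previously established $\ell$-partition result is the paper's main shortcut; your plan to verify each update to $(\mu,r,s,\rho,\sigma)$ directly would also work but is more laborious, and you may find it cleaner to borrow this trick.
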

\begin{proof}
We will prove \eqref{f_JM}; \eqref{e_JM} follows similarly. Suppose that $\lambda \approx 
(\mu,r,s,\rho,\sigma)$ has an addable $m$-box in the first row, and an addable $n$-box in the 
first column for two residues $m,n$. If $m\neq i \neq n$ then $\widehat{f}_i^{\widehat{\varphi}}$
 will only add boxes to the core $\mu$ in the Young diagram of $\lambda$, and not in the first row or column of $\mu$. But 
$\widehat{f}_{i-r+s}^{\widehat{\varphi}} \mu$ will again be a core, by Lemma \ref{cores}. 
Hence $\widehat{f}_i^{\widehat{\varphi}} \lambda \approx (\widehat{f}_{i-r+s}^{\varphi} \mu , r,s,\rho, \sigma)$.

Next we assume $m =  i \neq n$. The partition $\nu \approx (\mu, r, 0, \rho, \emptyset)$ is an $\ell$-partition.
 From Lemma \ref{cancelation}, we have no cancelation of $-+$ in $\lambda$, so that 
$\widehat{f}_{i-s} ^{\widehat{\varphi}_{i-s} (\nu)} \nu = \tilde{f}_{i-s} ^{\varphi_{i-s} (\nu)} \nu$. 
By Theorem \ref{top_and_bottom}, $\widehat{f}_{i-s} ^{\widehat{\varphi}_{i-s} (\nu)} \nu $ is an 
$\ell$-partition. Say 
$\widehat{f}_{i-s} ^{\widehat{\varphi}_{i-s} (\nu)}\nu \approx (\mu', r',0, \rho', \emptyset)$. 
Then $\widehat{f}_i^{\widehat{\varphi}} \lambda \approx (\mu',r',s,\rho',\sigma)$. 
A similar argument works when $m \neq i = n$ by using that the transpose of 
$(\mu, 0, s , \emptyset, \sigma)$ is an $\ell$-partition. 

We now suppose that $m = i = n$. 
In this case, $\lambda$ has an addable $i$-box in the first $r+1$ rows and $s+1$ columns. It may also have
addable $i$-boxes within the core $\mu$. $\lambda$ has no removable $i$-boxes. Thus we get 
$\widehat{f}_i^{\widehat{\varphi}} \lambda \approx (\widehat{f}_{i-r+s}^{\widehat{\varphi}_{i-r+s}(\mu)} \mu, r, s, \rho, \sigma)$ 
is an $(\ell,0)$-JM partition.

 \eqref{f_theorem_JM} follows from \ref{JMAAR}. \eqref{e_theorem_JM} is similar. 

\end{proof}

\subsection{All $(\ell,0)$-JM partitions are nodes of $ladd_\ell$}

\begin{theorem}\label{irreducible_nodes} If $\lambda$ is an $(\ell,0)$-JM partition then $\lambda$ is a node 
of $ladd_\ell$.

\begin{proof}
The proof is by induction on the size of a partition. If the partition has size zero then it is the empty partition which is an $(\ell,0)$-JM partition and is a node of the crystal $ladd_\ell$.

Suppose $\lambda \vdash n>0$ is an $(\ell,0)$-JM partition. Let $i$ be a 
residue such that $\lambda$ has at least one ladder-normal box of residue $i$. We can find such a box since no $-+$ cancellation exists by Lemma \ref{cancelation}. 
Define $\mu$ to be $\widehat{e}_i^{\widehat{\varepsilon}} \lambda$.
 Then $\mu \vdash (n-\hat{\varepsilon})$ is an $(\ell,0)$-JM partition by 
Theorem \ref{top_and_bottom_JM}, of smaller size than $\lambda$. By induction 
$\mu$ is a node of $ladd_\ell$. But $\widehat{f}_i^{\widehat{\varepsilon}} 
\mu= \widehat{f}_i^{\widehat{\varepsilon}} \widehat{e}_i^{\widehat{\varepsilon}}
 \lambda = \lambda$, so $\lambda$ is a node of $ladd_\ell$.
\end{proof}
\end{theorem}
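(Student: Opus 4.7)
The plan is to prove the statement by induction on $n = |\lambda|$. The base case $n = 0$ is immediate, since $\emptyset$ is a node of $ladd_\ell$ (it is the highest weight vector of $B(\Lambda_0)$). For the inductive step, suppose the claim holds for all $(\ell,0)$-JM partitions of size less than $n$, and let $\lambda \vdash n$ be an $(\ell,0)$-JM partition. The goal is to produce a smaller $(\ell,0)$-JM partition $\mu$ that is reached from $\lambda$ by a crystal operator $\widehat{e}_i$, so that applying the corresponding $\widehat{f}_i$'s shows $\lambda$ is a node.

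To set this up, I need to pick a good residue $i$. Since $\lambda$ is nonempty, it has at least one removable box; let $i$ be the residue of such a box, so the ladder $i$-signature of $\lambda$ contains at least one $-$. By Lemma \ref{cancelation}, the ladder $i$-signature of an $(\ell,0)$-JM partition equals its reduced ladder $i$-signature, so no $-+$ pairs cancel and the $-$ survives in the reduced signature. Hence $\widehat{\varepsilon}_i(\lambda) \geq 1$, and the partition $\mu := \widehat{e}_i^{\widehat{\varepsilon}} \lambda$ is well-defined and strictly smaller than $\lambda$. By Theorem \ref{top_and_bottom_JM}\eqref{e_JM}, $\mu$ is again an $(\ell,0)$-JM partition.

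By the inductive hypothesis, $\mu$ is a node of $ladd_\ell$. Now $\lambda = \widehat{f}_i^{\widehat{\varepsilon}} \widehat{e}_i^{\widehat{\varepsilon}} \lambda = \widehat{f}_i^{\widehat{\varepsilon}} \mu$, which realizes $\lambda$ as the result of applying crystal operators $\widehat{f}_i$ to a node of $ladd_\ell$, so $\lambda$ is itself a node. The main technical ingredients are really the two earlier results already established: Lemma \ref{cancelation} (to guarantee we can always find a ladder-normal box of the chosen residue, so the crystal operator moves us somewhere) and Theorem \ref{top_and_bottom_JM} (to guarantee that the resulting smaller partition remains $(\ell,0)$-JM, so the induction can continue). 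The only subtle point to double-check is that the existence of any removable $i$-box is enough to conclude the normal-box statement, which is immediate from Lemma \ref{cancelation}'s no-cancellation property; once that is in hand the induction runs without further obstacle.
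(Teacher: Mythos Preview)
Your proof is correct and follows essentially the same approach as the paper's: induction on $|\lambda|$, using Lemma~\ref{cancelation} to guarantee a ladder-normal removable $i$-box exists, Theorem~\ref{top_and_bottom_JM}\eqref{e_JM} to ensure $\mu=\widehat{e}_i^{\widehat{\varepsilon}}\lambda$ remains $(\ell,0)$-JM, and then recovering $\lambda=\widehat{f}_i^{\widehat{\varepsilon}}\mu$ from the inductive hypothesis. Your write-up is slightly more explicit in first choosing $i$ as the residue of some removable box before invoking the no-cancellation lemma, but the argument is the same.
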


\section{Generalizing Crystal Theorems}\label{gen_crystal}

We can now prove our generalization of Theorem \ref{top_and_bottom}.

\begin{proof}[Proof of Theorem \ref{top_and_bottom_weak}] Let $\lambda$ be a weak $\ell$-partition. Then $D^{\lambda} = S^{\nu}$ for some $(\ell,0)$-JM
 partition $\nu$ with $\mathcal{R} \nu = \lambda$ (by Theorem \ref{deco}). From Theorem 
\ref{irreducible_nodes} we know that $\nu \in ladd_\ell$.
The fact that the crystals are isomorphic implies that $\hat{\varphi}_i(\nu) = \varphi$. By Theorem 
\ref{top_and_bottom_JM}, $\widehat{f}_i^{\varphi} \nu$ is another $(\ell,0)$-JM partition. 
Since regularization provides the isomorphism (see \cite{B}), we know that $\mathcal{R} \widehat{f}_i^{\varphi} \nu = \tilde{f}_i^{\varphi} \lambda$.
 Theorem $\ref{deco}$ then implies that $D^{\tilde{f}_i^{\varphi} \lambda} = S^{\widehat{f}_i^{\varphi} \nu}$
, since $S^{\widehat{f}_i^{\varphi} \nu}$ is irreducible by Theorem \ref{JM_irred}. 
Hence $\tilde{f}_i^{\varphi} \lambda$ is a weak $\ell$-partition. The proof of (2) is similar.

To prove (4), we must show that there does not exist an $(\ell,0)$-JM partition
$\mu$ in the regularization class of $\tilde{f}_i^k \lambda$. There exists an 
$(\ell,0)$-JM partition $\nu$ in $ladd_\ell$ so that $D^{\lambda} = S^{\nu}$. By Theorem 
\ref{top_and_bottom_JM}, $\widehat{f}_i^{k} \nu$ is not an $(\ell,0)$-JM partition. But by Theorem
 \ref{irreducible_nodes} we know all $(\ell,0)$-JM partitions occur in $ladd_\ell$. Also, 
only one element of $\mathcal{RC}(\widetilde{f}_i^k \lambda)$ occurs in $ladd_\ell$ and we
 know this is $\widehat{f}_i^k \nu$. Therefore no such $\mu$ can exist, so
 $\widetilde{f}_i^k \lambda$ is not a weak $\ell$-partition. (4) follows similarly.
\end{proof}

\begin{thma}
 One can also prove Theorem \ref{top_and_bottom_weak} via representation theory. For more details see the author's Ph.D. thesis \cite{Bphd}.
\end{thma}

\begin{section}{Acknowledgements}
 I would like to thank my advisor Monica Vazirani for her help and comments with this paper.
\end{section}

\bibliographystyle{amsalpha}

\begin{thebibliography}{A}

\bibitem{Bphd} C. Berg, Combinatorics of $(\ell,0)$-JM partitions, $\ell$-cores, the ladder crystal and the finite Hecke algebra, Ph. D. thesis. \textbf{arXiv} 0906.1559.
\bibitem{B} C. Berg, The ladder crystal, \textbf{Electronic Journal of Combinatorics}, to appear.
\bibitem{BV} C. Berg and M. Vazirani, ($\ell,0)$-Carter partitions, a generating function, and their crystal theoretic interpretation, \textbf{Electronic Journal of Combinatorics}, 15 (1) (2008).
 \bibitem{DJ}
R. Dipper and G. James, \textbf{Representations of Hecke algebras of general linear groups}, Proc. LMS (3), \textbf{52} (1986), 20-52
\bibitem{F}
M. Fayers, Irreducible Specht modules for Hecke Algebras of Type A, \textbf{Adv. Math.} \textbf{193} (2005) 438-452
\bibitem{J} G.D. James, The decomposition matrices of $GL_n(q)$ for $n\leq10$, \textbf{Proc. Lond. Math. Soc.} (3), \textbf{60} (1990), 225-264.
\bibitem{J2} G.D. James, On the Decomposition Matrices of the Symmetric Groups II, \textbf{J. of Algebra} \textbf{43} (1976), 45-54.
\bibitem{JK}
G.D. James and A.Kerber, \textbf{The Representation Theory of the Symmetric Group}, Encyclopedia of Mathematics, \textbf{16}, 1981.
\bibitem{JM}
G.D. James and A. Mathas, A q-analogue of the Jantzen-Schaper theorem, \textbf{Proc. Lond. Math. Soc.}, \textbf{74} (1997), 241-274. 
\bibitem{L}
S. Lyle, Some q-analogues of the Carter-Payne Theorem, \textbf{Journal f\"{u}r die reine und angewandte Mathematik} Volume 2007, Issue 608.
\bibitem{MM} K.C. Misra and T. Miwa, Crystal base for the basic representation of $U_q( \mathfrak{sl}_n )$, \textit{Commun. Math. Phys.} \textbf{134} (1990), 79-88.

\end{thebibliography}

\end{document}